\newcommand{\shrinkmargins}[1]{
  \addtolength{\textheight}{#1\topmargin}
  \addtolength{\textheight}{#1\topmargin}
  \addtolength{\textwidth}{#1\oddsidemargin}
  \addtolength{\textwidth}{#1\evensidemargin}
  \addtolength{\topmargin}{-#1\topmargin}
  \addtolength{\oddsidemargin}{-#1\oddsidemargin}
  \addtolength{\evensidemargin}{-#1\evensidemargin}
  }
\newtheorem{theorem}{Theorem}
\newtheorem{lemma}[theorem]{Lemma}
\newtheorem{corollary}[theorem]{Corollary}
\newtheorem{conjecture}{Conjecture}
\newtheorem{proposition}[theorem]{Proposition}
{Claim}
\newtheorem{assumption}{Assumption}
\theoremstyle{remark}
\newtheorem*{remark}{Remark}
\newtheorem*{remarks}{{\bf Remarks}}
\numberwithin{theorem}{section} \numberwithin{equation}{section}
\begin{document}
\title[]{Polynomization of the \\Chern--Fu--Tang conjecture}
\author{Bernhard Heim }
\address{Faculty of Mathematics, Computer Science, and Natural Sciences,
RWTH Aachen University, 52056 Aachen, Germany}
\email{bernhard.heim@rwth-aachen.de}
\author{Markus Neuhauser}
\address{Kutaisi International University (KIU), 5/7, Youth Avenue,  Kutaisi, 4600, Georgia}
\email{markus.neuhauser@kiu.edu.ge}
\subjclass[2010] {Primary 05A17, 11P82; Secondary 05A20}
\keywords{Integer Partitions, Polynomials, Partition Inequality}


\begin{abstract}
Bessenrodt and Ono's work on additive and multiplicative properties of the partition function and
DeSalvo and Pak's paper on the log-concavity 
of the partition function have generated many beautiful theorems and conjectures. 
In January 2020, the first author gave a lecture at the MPIM in Bonn on
a conjecture of Chern--Fu--Tang, and presented an extension (joint work with Neuhauser) involving polynomials.
Partial results have been announced. Bringmann, Kane, Rolen and Tripp provided complete proof of the Chern--Fu--Tang conjecture,
following advice from Ono to utilize a recently provided exact formula for the fractional partition functions.
They also proved a large proportion of Heim--Neuhauser's conjecture, which 
is the polynomization of Chern--Fu--Tang's conjecture. 
We prove several cases, not covered by Bringmann et.\ al. Finally, we lay out a general approach 
for proving the conjecture.
\end{abstract}

\maketitle
\newpage
\section{Introduction and main results}
Chern, Fu and Tang \cite{CFT18} conjectured an inequality for $k$-colored partition functions.
A partition of $n$ is called $k$-colored if each part can appear in $k$ colors and the number of these 
partitions has been denoted by $p_{-k}(n)$.
\begin{conjecture} [Chern, Fu, Tang 2018] \label{CFT} Let $n > m \geq 1$ and $k \geq 2$, except for $(k,n,m) = (2,6,4)$, then
\begin{equation}
p_{-k} (n-1) \, p_{-k} (m+1)  \geq \, p_{-k} (n) \, p_{-k} (m).
\end{equation}
\end{conjecture}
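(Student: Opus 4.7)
The plan is to reduce the conjecture to monotonicity of the sequence of ratios $p_{-k}(j+1)/p_{-k}(j)$ and then combine a Hardy--Ramanujan--Rademacher asymptotic with a finite-range numerical verification. First, since $p_{-k}(j)>0$ for all $j$, the proposed inequality is equivalent to
\[
\frac{p_{-k}(m+1)}{p_{-k}(m)} \;\geq\; \frac{p_{-k}(n)}{p_{-k}(n-1)}.
\]
Setting $r_k(j):=p_{-k}(j+1)/p_{-k}(j)$, and using $1 \leq m \leq n-1$, it suffices to show that $j \mapsto r_k(j)$ is non-increasing on the positive integers for every fixed $k \geq 2$, with the single admitted failure at $k=2$ responsible for the $(2,6,4)$ exception. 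The special case $m=n-1$ is exactly the log-concavity of $p_{-k}$, so the general conjecture strictly strengthens log-concavity.

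Next, exploiting that $\prod_{j \geq 1}(1-q^j)^{-k} = q^{k/24}\,\eta(\tau)^{-k}$ is essentially a weight $-k/2$ modular form, I would apply the Rademacher exact formula for $p_{-k}(n)$, following the strategy advocated by Ono and carried out by Bringmann--Kane--Rolen--Tripp. The leading term gives
\[
p_{-k}(n) = C_k\, n^{-(k+3)/4}\, \exp\!\left(\pi\sqrt{2k(n-k/24)/3}\right)\bigl(1 + O(n^{-1/2})\bigr),
\]
whose logarithmic derivative produces an expansion
\[
r_k(j) = 1 + \alpha_k\, j^{-1/2} + \beta_k\, j^{-1} + O(j^{-3/2}),\qquad \alpha_k = \pi\sqrt{k/6}>0,
\]
which is manifestly monotonically decreasing in $j$ beyond an effective threshold $N_0(k)$ extracted from explicit bounds on higher Kloosterman terms and on tails of the Bessel function $I_{(k+3)/2}$.

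Finally, for the finitely many pairs $(k,j)$ with $k$ small or $j \leq N_0(k)$, I would check $r_k(j) \geq r_k(j+1)$ by computing $p_{-k}$ from the Euler-type recursion
\[
n\, p_{-k}(n) = k \sum_{j=1}^{n} \sigma(j)\, p_{-k}(n-j),
\]
and the exception $(k,n,m)=(2,6,4)$ would be located at precisely this numerical stage (corresponding to the lone inversion $r_2(4)<r_2(5)$).

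The principal obstacle is making the Rademacher error bounds effective \emph{uniformly in $k$}, so that $N_0(k)$ grows slowly enough to permit a terminating finite check. Establishing monotonicity of $r_k(j)$---a genuinely stronger statement than log-concavity---requires tracking not just the main term but also the precise sign of the next-order correction in the asymptotic expansion; it is in this delicate estimate, bounding the $c \geq 2$ contributions in the exact formula sharply enough to pin down the sign of $\partial_j r_k(j)$, that the technical heart of the proof lies.
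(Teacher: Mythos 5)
Your opening reduction to monotonicity of $r_k(j)=p_{-k}(j+1)/p_{-k}(j)$ is correct, and the route you sketch (Rademacher-type exact formula, effective error bounds, finite verification) is in spirit the Bringmann--Kane--Rolen--Tripp argument that this paper cites for Conjecture 1 (the paper does not reprove it; its own contribution is the polynomization, and it only recounts the BKRT proof in the concluding section). But as written your proposal has two genuine gaps. First, the expansion $r_k(j)=1+\alpha_k j^{-1/2}+\beta_k j^{-1}+O(j^{-3/2})$ cannot by itself yield monotonicity: the quantity you need to control, $r_k(j)-r_k(j+1)$, is of size $\tfrac12\alpha_k j^{-3/2}$, which is exactly the order of your error term, so nothing is ``manifest'' beyond any threshold. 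One needs error bounds strictly smaller than the effect being detected; this is precisely the content of Theorem 1.3 quoted in the paper, where the main term carries the small factor $\sqrt{A}-\sqrt{B}$ and the relative error $O_{\leq}(2/3)$ has to be beaten by it. You name this as the technical heart but do not supply it, so what you have is a program, not a proof.

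Second, and more structurally, the ``principal obstacle'' you flag --- uniformity in $k$ --- is not overcome in the actual proof by uniform-in-$k$ Rademacher estimates; it is circumvented. As summarized in Section 6 of the paper, BKRT prove ratio-monotonicity (log-concavity) directly only for $k=2$ (for $n\geq 6$), $3$, $4$, $5$, where the threshold is of size about $2k^{11}$ and the finite checks already required $5$-day and $71$-day computations for $k=4,5$; all $k\geq 6$ are then handled at a stroke by Hoggar's theorem that the convolution of positive log-concave sequences is log-concave, since $\{p_{-k}(n)\}$ is the convolution of copies of $\{p_{-3}(n)\}$, $\{p_{-4}(n)\}$, $\{p_{-5}(n)\}$ (every $k\geq 6$ is a sum of $3$'s, $4$'s, $5$'s). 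Without this convolution step, or some substitute, your plan demands a separate and rapidly growing finite computation for every $k$, and hence never terminates. A smaller omission: because of the lone inversion $r_2(4)<r_2(5)$, monotonicity away from $j=4$ does not immediately give all pairs $(m,n)$ for $k=2$; you must also verify $r_2(4)\geq r_2(j)$ for $j\geq 6$ to conclude that $(2,6,4)$ is the only exception.
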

The conjecture has been motivated by two results. The first was
the work of Nicolas \cite{Ni78} and DeSalvo and Pak \cite{DP15} on the log-concavity of the 
partition function $p(n)= p_{-1}(n)$, $n >25$.
The second was the work of
Bessenrodt and Ono \cite{BO16}, and Alanazi, Gagola and Munagi \cite{AGM17} on an inequality involving
additive and multiplicative properties of the partition function. 
The conjecture is based on numerical evidence (\cite{CFT18}, Table 1).
For $b=a-2$, the conjecture implies the log-concavity for $p_{-k}(n)$ with respect to $n$ for $n \geq 3$ , $ k \geq 2$. One has to exclude the
case $k=2$ and $n=5$, since $ \left(p_{-2}(5)\right)^2 < p_{-2}(4) \, p_{-2}(6)$.

In \cite{HN19B} we proposed a polynomization of the Bessenrodt--Ono inequality. We also refer to 
recent work by Beckwith and Bessenrodt \cite{BB16}, Dawsey and Masri \cite{DM19}, and Hou and Jagadeesan \cite{HJ18}.
We transferred the inequality of the discrete $k$-colored partition
function to an inequality between
values of polynomials $P_n(x)$, defined as the coefficients of the $q$-expansion of all
powers of the Dedekind $\eta$-function \cite{On03}:
\begin{equation}
\sum_{n=0}^{\infty} P_n(z) \, q^n = \prod_{n=1}^{\infty} 
\left( 1 - q^n \right)^{-z}, \qquad ( q,z \in \mathbb{C}, \, \vert q \vert < 1).
\end{equation}
The polynomials can easily be recorded, for example $P_0(x)=1, P_{1}(x)=x, P_2(x)=(x+3)x/2$. They have interesting properties.
The $k$-colored partition function $p_{-k}(n)$ is equal to $P_n(k)$. Further, let for example, 
the root $x=-3$ of $P_2(x)$ be given. Then among all $2$nd coefficients of all non-trivial complex powers of $\prod_n (1-q^n)$, 
the coefficient assigned  to the $3$rd power vanishes.

Let $a,b \in \mathbb{N}$ with $a+b >2$ and $x \in \mathbb{R}$ with $x>2$. Then the inequality states:
\begin{equation}\label{BO}
P_a(x) \, \cdot \, P_b(x) > P_{a+b}(x).
\end{equation}
The proof was provided in \cite{HNT20}.

Building on 
Chern--Fu--Tang's result for $k=2$ and the positivity of the derivative of
$P_{a,b}(x):=
P_a(x) \, \cdot \, P_b(x)  - P_{a+b}(x)$ for $x >2$, we proposed an extension of
the Chern--Fu--Tang conjecture \cite{He20}.
\begin{conjecture}[Heim, Neuhauser] \label{HN: Conjecture 2}
Let $a > b \geq 0$ be integers. Then for all $x \geq 2$:
\begin{equation}\label{co}
\Delta_{a,b}(x) := P_{a-1}(x) P_{b+1}(x) - P_{a}(x) P_{b}(x) \geq 0,
\end{equation}
except for $b=0$ and $(a,b) = (6,4)$. The inequality (\ref{co}) is still true for $x \geq 3$ for $b=0$ and
for $x \geq x_{6,4}$ for $(a,b)=(6,4)$. Here $x_{a,b}$ is the largest real root of $\Delta_{a,b}(x)$.
\end{conjecture}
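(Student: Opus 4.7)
The plan is to treat $\Delta_{a,b}(x)$ as a real polynomial in $x$ and exploit the fact that at integer values $x = k \geq 2$ the inequality reduces to Chern--Fu--Tang (proved by Bringmann--Kane--Rolen--Tripp), so the task is to extend the inequality from integer points to all real $x \geq 2$. First I would record the standard recursion
\begin{equation*}
n P_n(x) = x \sum_{k=1}^{n} \sigma(k)\, P_{n-k}(x),
\end{equation*}
obtained from the logarithmic derivative of $\prod_{n \geq 1}(1-q^n)^{-x}$, and use it to expand $\Delta_{a,b}(x)$ in terms of lower $P_j(x)$. Comparing leading terms, $P_a(x)P_b(x)$ and $P_{a-1}(x)P_{b+1}(x)$ share the same degree $a+b$ and the same leading coefficient, so $\Delta_{a,b}(x)$ has strictly smaller degree; computing its actual leading coefficient gives positivity as $x\to\infty$, confirming that the inequality cannot fail for arbitrarily large $x$.

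Next I would split the proof by the size of $a - b$. For $b = 0$, the inequality collapses to $x P_{a-1}(x) \geq P_a(x)$, which a single application of the recursion reduces to the non-negativity of $(a-1)P_{a-1}(x) - \sum_{k=2}^{a}\sigma(k)P_{a-k}(x)$ for $x \geq 3$; this follows from bookkeeping on the $P_j(x)$ because the offending small values of $a$ produce exactly the exceptional threshold $x = 3$. For $b \geq 1$ and small $a - b$ (say $a - b \leq 3$), a direct induction on $a$ via the recursion expresses $\Delta_{a,b}$ as a combination of $\Delta_{a',b'}$ with strictly smaller $a'+b'$, and the induction closes once the base cases have been checked by computer algebra. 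The excluded pair $(a,b) = (6,4)$ is dealt with by explicit computation of $\Delta_{6,4}(x)$ and numerical location of its largest real root $x_{6,4}$.

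The main obstacle is the regime in which $a$ and $b$ are both large and $x$ lies in a non-integer sub-interval of $[2, 3)$; here the discrete circle-method input used by Bringmann et al.\ is unavailable. The general approach I would adopt is to replace their exact formula by a two-sided asymptotic of the form
\begin{equation*}
P_n(x) \;=\; C(x)\, n^{-(x+3)/4}\, e^{\pi \sqrt{2xn/3}}\, \bigl(1 + E_n(x)\bigr),
\end{equation*}
with an error $E_n(x)$ that is both effective and uniform for $x$ in compact subintervals of $[2,\infty)$. Plugging such an asymptotic into $\Delta_{a,b}(x)$ shows that the dominant terms cancel and the sign is controlled by a lower-order expansion; combined with the finite verification of $\Delta_{a,b}$ for $a+b$ below the threshold at which the uniform asymptotic becomes sharp, together with the induction of the second step, this would complete the proof. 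The crux, and the hardest technical step, is producing the effective $x$-uniform error term for $P_n(x)$ that is strong enough to close the gap with the explicit finite range.
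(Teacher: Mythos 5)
First, a point of orientation: the statement you are proving is stated in the paper as a \emph{conjecture}. The paper itself does not prove it; it proves Theorem \ref{main} (the cases $b\in\{0,1,2,3\}$, all $a$, $x>x_0$ with $x_0\in\{1,2\}$) by linearizing the ratio $P_{b+1}(x)/P_b(x)$, setting $F_a(x)=G_b(x)P_{a-1}(x)-P_a(x)$, and running an induction on $a$ through the derivative formula $P_n'(x)=\sum_{k=1}^n\frac{\sigma(k)}{k}P_{n-k}(x)$ with Kostant-type coefficient bounds and $\sigma(a)<a(1+\ln a)$; for general $b$ it only outlines a conditional program (Section 4) and invokes Bringmann--Kane--Rolen--Tripp for $b\geq B_0(x)$. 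So your proposal cannot be judged as an alternative to a complete proof in the paper; it has to stand on its own, and it does not.

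There are two concrete gaps. First, your middle step is unsupported: the recursion $nP_n(x)=x\sum_{k=1}^n\sigma(k)P_{n-k}(x)$ does not express $\Delta_{a,b}$ as a combination of $\Delta_{a',b'}$ with smaller $a'+b'$ of controlled sign. Expanding either product produces cross terms of the form $P_{a-1-k}(x)P_{b+1}(x)-P_{a-k}(x)P_b(x)$, whose nonnegativity is exactly the statement being proved (and is genuinely false for some shifted index pairs, e.g.\ near the exceptional cases), so the induction as described does not close; moreover restricting it to $a-b\leq 3$ leaves the typical case (fixed $b$, $a$ large) to your asymptotic branch. Second, the asymptotic branch misdiagnoses the open regime and assumes the hard part. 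Theorem \ref{BKRT} already holds for all real $x\geq 2$, so non-integer $x$ in $[2,3)$ is not the obstruction; the obstruction is $b<B_0(x)\approx 2x^{11}$, i.e.\ $b$ small relative to $x$. For fixed $b$ (say $b=4$) none of your branches applies: the integer reduction does not cover real $x$, BKRT needs $b\geq 2x^{11}$, the small-$(a-b)$ induction does not apply for large $a$, and an error term ``uniform for $x$ in compact subintervals of $[2,\infty)$'' combined with a finite check cannot cover all $x\geq 2$, since the leading-coefficient argument gives only a threshold $x_{a,b}$ that is not uniform in $(a,b)$. The crux you name --- an effective error small enough to survive the cancellation of main terms, whose surviving piece carries the small factor $\sqrt{A}-\sqrt{B}$ --- is precisely why BKRT need $B\geq 2x^{11}$, and you give no mechanism to weaken that; asserting its existence is assuming the unsolved part of the conjecture. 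By contrast, the paper's route for small $b$ deliberately avoids asymptotics in $n$ and instead proves $\Delta_{a,b}(x_0)\geq 0$ plus $F_a'(x)>0$ for $x>x_0$; if you want a workable plan, that (or the four-assumption program of Section 4) is the part to develop, not an $x$-uniform circle-method error term.
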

\begin{remarks}\hfill
\begin{itemize}
\item[a)]Conjecture 2 implies Conjecture 1.
\item[b)]We have $\Delta_{a,b}(0)=0$ and $\Delta_{a,a-1}(x)= 0$.
The leading coefficient of the polynomial $\Delta_{a,b}(x)$ is equal
to $\frac{a-b-1}{a! \, (b+1)!}$
for $a > b+1$. Thus, we have
\begin{equation*}
\lim_{x \rightarrow \infty} \Delta_{a,b}(x) = \infty. 
\end{equation*}
\item[c)] We have $\Delta_{a,0}(2)> 0$ and $\Delta_{a,1}(2)> 0$ for $a > 4$. This follows from \cite{HNT20}.
\item[d)] The case $b=0$ follows from (\ref{BO}) using properties of $P_{a-1,1}(x)$ \cite{HNT20}.
\item[e)] In \cite{He20} the case $b=1$ was already announced (proof is given in this paper).
\item[f)] The conjecture as stated in \cite{He20} for $(a,b)=(6,4)$ is refined. Note that $\Delta_{6,4}(2)<0$, 
which does not allow $\Delta_{6,4}(2)\geq 0$ for all $x >2$. 
This was also observed during the presentation (see also \cite{BKRT20}, remark related to Conjecture 2).
\end{itemize}
\end{remarks}
Expanding on an exact formula for the fractional partition function (in terms of Kloosterman sums and Bessel functions)
by Iskander, Jain and Talvola \cite{IJT20}, recently, Bringmann, Kane, Rolen, and Tripp \cite{BKRT20}
proved that for all positive real numbers $x_1,x_2,x_3,x_4$ and $n_1,n_2,n_3,n_4 \in \mathbb{N}$:
\begin{equation}
P_{n_{1}}(x_{1}) \, P_{n_{2}}(x_{2}) \geq P_{n_{3}}(x_{3}) \, P_{n_{4}}(x_{4}), 
\end{equation}
with respect to some general assumptions. They also obtained an explicit version.
We recall their result. Let $f(x) = O_{\leq}\left(g(x)\right)$ mean
that $\vert f(x) \vert \leq g(x)$ in some domain.
\begin{theorem}[Bringmann, Kane, Rolen, Tripp 2020] \label{BKRT}
Fix $ x \in \mathbb{R}$ with $x \geq 2$, and let $a, b \in \mathbb{N}_{\geq 2}$ with $a > b+1$. 
Set $A:= a-1 - \frac{x}{24}$ and $B:= b - \frac{x}{24}$, we suppose
$B \geq \max \, \left\{ 2 \, x^{11}, \frac{100}{x -24} \right\} $. Then 
\begin{eqnarray*}
\Delta_{a,b}(x) & = & P_{a-1}(x) \,  P_{b+1}(x) -  P_{a}(x) \,  P_{b}(x)
\\
& = &
\pi \, 
\left( \frac{x}{24}\right)^{\frac{x}{2} + 1}
(A B)^{-\frac{x}{4} - \frac{5}{4}} \, 
e^{\pi \sqrt{\frac{2 x}{3}} \left( \sqrt{A} + \sqrt{B} \right) }
\left( \sqrt{A} - \sqrt{B} \right) 
\left( 1 + O_{\leq}\left(\frac{2}{3} \right)\right).
\end{eqnarray*}
\end{theorem}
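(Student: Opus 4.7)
The plan is to feed Iskander--Jain--Talvola's Rademacher-type exact formula for $P_n(x)$ (which expresses $P_n(x)$ as a sum over $k\geq 1$ of Kloosterman-type coefficients times a modified Bessel function of argument proportional to $\tfrac{1}{k}\sqrt{(2x/3)N}$, where $N:=n-x/24$) into each of the four factors appearing in $\Delta_{a,b}(x)=P_{a-1}(x)P_{b+1}(x)-P_{a}(x)P_{b}(x)$, and then track the leading behavior carefully. Because the Bessel argument shrinks in $k$, under the hypothesis $B\geq 2x^{11}$ only the $k=1$ term survives at leading order; the tail $k\geq 2$ is controlled by combining $|A_k(n,x)|\leq k$ with the monotonicity and the explicit asymptotics of $I_{x+1}$, and is exponentially smaller than the main term. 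Replacing the surviving Bessel function by its Hankel asymptotic $I_{\nu}(y)=e^{y}(2\pi y)^{-1/2}\bigl(1+O(1/y)\bigr)$ and collecting exponents puts $P_n(x)$ into the effective form
\[
P_n(x)=C(x)\,N^{-(x+3)/4}\,e^{L(N)}\,\bigl(1+\varepsilon(N,x)\bigr),\qquad L(N):=\pi\sqrt{\tfrac{2x}{3}}\,\sqrt{N},
\]
for an explicit constant $C(x)$ and an explicit, small relative remainder $\varepsilon(N,x)$.

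Setting $G(N):=N^{-(x+3)/4}e^{L(N)}$ and $r_N:=G(N+1)/G(N)$, the elementary identity
\[
G(A)\,G(B+1)-G(A+1)\,G(B)=G(A)\,G(B)\,\bigl(r_B-r_A\bigr)
\]
converts $\Delta_{a,b}(x)$ into a first-difference of $r_N$, extracting a genuine size from an otherwise heavily-cancelling expression. A Taylor expansion of
\[
r_N=(1+1/N)^{-(x+3)/4}\exp\bigl(\pi\sqrt{2x/3}\,(\sqrt{N+1}-\sqrt{N})\bigr)
\]
yields $r_N=1+\tfrac{\pi}{2}\sqrt{2x/3}\,N^{-1/2}+O(1/N)$, whence
\[
r_B-r_A=\tfrac{\pi}{2}\sqrt{\tfrac{2x}{3}}\,\cdot\,\frac{\sqrt{A}-\sqrt{B}}{\sqrt{AB}}+O(1/B).
\]
Substituting this into $C(x)^{2}\,G(A)G(B)\,(r_B-r_A)$, the powers of $A$ and $B$ combine to $(AB)^{-(x+3)/4-1/2}=(AB)^{-x/4-5/4}$, the exponential becomes $e^{L(A)+L(B)}$, and the overall constant $C(x)^{2}\cdot\tfrac{\pi}{2}\sqrt{2x/3}$ collapses, after elementary simplification of the Bessel prefactor, to $\pi(x/24)^{x/2+1}$. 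This reproduces the announced leading term $\pi(x/24)^{x/2+1}(AB)^{-x/4-5/4}\,e^{L(A)+L(B)}\,(\sqrt{A}-\sqrt{B})$.

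The bulk of the work, and the main obstacle, is the uniform explicit bookkeeping of all error terms so as to obtain the sharp $O_{\leq}(2/3)$ bound under the stated hypothesis on $B$. Three quasi-independent sources of error must be controlled simultaneously: (i) the Kloosterman-Rademacher tail $k\geq 2$, for which a uniform-in-$x$ majorant of $I_{x+1}$ is needed; (ii) the Hankel remainder for $I_{x+1}(y)$, whose explicit form carries $x$-dependent constants and is precisely what forces the polynomial threshold $B\geq 2x^{11}$; and (iii) the Taylor remainders in the expansion of $r_N$, where one must verify that the second-order $O(N^{-1})$ correction does not overwhelm the leading $N^{-1/2}$ term once it is divided by the possibly small factor $\sqrt{A}-\sqrt{B}$. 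Each of these estimates is routine in isolation, but assembling them into a single multiplicative error no larger than $2/3$ is the delicate quantitative step.
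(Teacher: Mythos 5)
The statement you are proving is not proved in this paper at all: Theorem \ref{BKRT} is quoted verbatim from Bringmann--Kane--Rolen--Tripp \cite{BKRT20}, and the authors only recall it as input for Corollary \ref{cor: 1.2}. So there is no in-paper proof to compare against; your proposal has to stand on its own as a reconstruction of the BKRT argument, and as such it is only a plan with the decisive step missing.

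The genuine gap is quantitative, and it is not merely ``routine bookkeeping'': your scheme of writing each factor as $P_n(x)=C(x)N^{-(x+3)/4}e^{L(N)}\bigl(1+\varepsilon(N,x)\bigr)$ and then subtracting cannot, as stated, produce the multiplicative error $O_{\leq}(2/3)$. The relative size of $\Delta_{a,b}(x)$ against $P_{a-1}(x)P_{b+1}(x)$ is of order $\sqrt{x}\,(\sqrt{A}-\sqrt{B})/\sqrt{AB}$, which in the admissible case $a=b+2$ is as small as about $\sqrt{x}\,B^{-3/2}$; but the Hankel remainder you invoke for $I_{x+1}(y)$ with $y\asymp\sqrt{xB}$ contributes a relative error of order $x^{3/2}B^{-1/2}$ to each individual factor, and even under $B\geq 2x^{11}$ this is vastly larger than $\sqrt{x}\,B^{-3/2}$. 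Hence bounding the four $\varepsilon$'s separately and adding them destroys the main term; only the $k\geq2$ Rademacher tail is exponentially small enough to be treated this crudely. One must instead exploit that the smooth (in $N$) correction terms nearly cancel in the combination $P_{a-1}P_{b+1}-P_aP_b$ --- e.g.\ by keeping the Bessel functions and differencing the full expressions, or by expanding to higher order with remainders that are themselves differenced over $[B,A]$ --- and this is exactly the content of the BKRT proof. You flag this difficulty in your items (ii)--(iii) but do not resolve it, and the same issue applies to your Taylor expansion of $r_N$, whose $O(1/N)$ remainder likewise swamps the main term unless its smooth dependence on $N$ is used. Until that cancellation mechanism is made explicit (together with the verification that the assembled error is $\leq 2/3$ under the stated hypothesis on $B$), the theorem is not established by your argument.
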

This leads to proof of the Chern--Fu--Tang conjecture and
to a large proportion of the Heim--Neuhauser conjecture.
We provide more details in the final section of this paper.
\begin{corollary} [Bringmann, Kane, Rolen, Tripp 2020] \label{cor: 1.2}
For any real number $x \geq 2$ and positive integers 
\begin{equation}
b \geq  B_0:= \max\, \left\{ 2 \, x^{11} + \frac{x}{24}, \frac{100}{x - 24} + \frac{x}{24} \right\}
\end{equation}
Conjecture 2 is true.
\end{corollary}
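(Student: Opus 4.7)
The plan is to derive the corollary as an almost immediate consequence of Theorem~\ref{BKRT}, by translating the hypothesis $b \geq B_0$ into the setup of that theorem and then checking the signs of the factors in its asymptotic formula for $\Delta_{a,b}(x)$.

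First I would dispose of the degenerate case $a = b+1$: by Remark~b) above, $\Delta_{a,a-1}(x) \equiv 0$, so the inequality $\Delta_{a,b}(x) \geq 0$ holds as equality. From now on I assume $a > b+1$. Next I would verify the hypotheses of Theorem~\ref{BKRT}. The substitution $B = b - x/24$ turns the assumption $b \geq B_0$ of the corollary precisely into
\[
B \geq \max\!\left\{2\,x^{11},\; \tfrac{100}{x-24}\right\},
\]
which is the hypothesis of Theorem~\ref{BKRT}. The only other requirement is $a,b \in \mathbb{N}_{\geq 2}$, but $b \geq B_0 \geq 2\cdot 2^{11} + \tfrac{1}{12} > 4000$ for $x \geq 2$, so $b \geq 2$ and therefore $a \geq b+2 \geq 2$. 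The same large lower bound automatically excludes the exceptional cases $b = 0$ and $(a,b) = (6,4)$, so no case-by-case refinement is needed.

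With the hypotheses in place I would apply Theorem~\ref{BKRT} and inspect each of the four factors in its formula. Since $A > B \geq 2x^{11} > 0$, the prefactor $\pi (x/24)^{x/2+1}(AB)^{-x/4-5/4}\exp\!\bigl(\pi\sqrt{2x/3}(\sqrt{A}+\sqrt{B})\bigr)$ is strictly positive; the linear factor $\sqrt{A} - \sqrt{B}$ is strictly positive because $a > b+1$ forces $A > B$; and the error factor $1 + O_{\leq}(2/3)$ is bounded below by $1 - 2/3 = 1/3 > 0$. Multiplying yields $\Delta_{a,b}(x) > 0$, which is exactly Conjecture~\ref{HN: Conjecture 2} in this range. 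There is essentially no mathematical obstacle in the argument, since all the analytic work is already encapsulated in Theorem~\ref{BKRT}; the only minor subtlety is cosmetic, namely that $100/(x-24)$ is negative for $x \in [2,24)$ and simply drops out of the maximum in that sub-range. The \emph{genuine} obstacle, lying beyond the scope of this corollary, would be to lower the threshold $B_0$ substantially so as to meet the small-$b$ cases treated elsewhere in the paper, which would require sharpening the $O_{\leq}(2/3)$ error estimate in the asymptotic expansion of $P_n(x)$.
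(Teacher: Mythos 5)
Your derivation is correct and is essentially the intended argument: the paper simply quotes this corollary from \cite{BKRT20} without separate proof, and it follows from Theorem~\ref{BKRT} exactly as you describe, via the substitution $B=b-\frac{x}{24}$, the positivity of the prefactor and of $\sqrt{A}-\sqrt{B}$ for $a>b+1$, the bound $1+O_{\leq}\left(\frac{2}{3}\right)\geq \frac{1}{3}$, and the trivial case $a=b+1$ where $\Delta_{a,a-1}(x)\equiv 0$. No gaps.
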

\begin{corollary} [Bringmann, Kane, Rolen, Tripp 2020]
The conjecture of Chern--Fu--Tang (Conjecture 1) is true. In particular $p_{-2}(n)$ is log-concave for $n \geq 6$, and
$p_{-k}(n)$ is log-concave for all $n$ and $k \in  \mathbb{N}_{\geq 3}$.
\end{corollary}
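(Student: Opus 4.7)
The plan is to combine the ``large $b$'' regime supplied by Corollary 1.2 with an asymptotic tail argument and a bounded numerical verification; the log-concavity statements then follow by a direct specialization.

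By Remark a) the inequality in Conjecture 1 with parameters $(k,n,m)$ is precisely $\Delta_{n,m}(k)\ge 0$. Thus for each integer $k\ge 2$, Corollary 1.2 already gives Conjecture 1 whenever $m\ge B_0(k)$. What remains is, for each such $k$ and each $m$ in the finite range $1\le m<B_0(k)$, the inequality $\Delta_{n,m}(k)\ge 0$ for all $n>m$, together with the single admitted exception $(k,n,m)=(2,6,4)$.

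For this residual set I would split on the size of $n$. From the Hardy--Ramanujan expansion (with explicit error control extractable from the Iskander--Jain--Talvola formula that feeds Theorem 1.1)
\[
\frac{p_{-k}(n)}{p_{-k}(n-1)} \;=\; 1+\pi\sqrt{\frac{k}{6n}}+O\!\left(\frac{1}{n}\right),
\]
the left-hand ratio tends to $1^{+}$ as $n\to\infty$, while for each fixed admissible $m$ the ratio $p_{-k}(m+1)/p_{-k}(m)$ is a fixed constant strictly greater than $1$. Rewriting $\Delta_{n,m}(k)\ge 0$ as
\[
\frac{p_{-k}(m+1)}{p_{-k}(m)}\;\ge\;\frac{p_{-k}(n)}{p_{-k}(n-1)},
\]
an effective version of this expansion yields an explicit cut-off $N_0(k,m)$ beyond which the inequality is automatic. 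The finitely many remaining triples, those with $m<B_0(k)$ and $n\le N_0(k,m)$, are then dispatched by direct computation using the recursion
\[
n\,P_n(x)=x\sum_{j=1}^{n}\sigma(j)\,P_{n-j}(x),
\]
evaluated at $x=k$, together with $p_{-k}(n)=P_n(k)$.

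The log-concavity claims follow by specializing to $b=a-2$, where Conjecture 1 reads $p_{-k}(n-1)^{2}\ge p_{-k}(n)\,p_{-k}(n-2)$. For $k\ge 3$ this is now unconditional for all $n$; for $k=2$ the sole CFT exception $(k,n,m)=(2,6,4)$ is precisely the known failure at $n=5$, leaving log-concavity for $n\ge 6$. The principal difficulty is the middle step: making the ratio expansion effective enough, with uniform error bounds valid down to moderate $n$, so that $N_0(k,m)$ is small enough to keep the finite verification window $m<B_0(k),\ n\le N_0(k,m)$ tractable for every relevant $k$ and $m$.
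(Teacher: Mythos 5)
There is a genuine gap, and it is the quantifier over $k$. Conjecture 1 asserts the inequality for \emph{every} integer $k\ge 2$, and your scheme treats $k$ as fixed: for each $k$ you invoke Corollary \ref{cor: 1.2} for $m\ge B_0(k)$, an effective ratio expansion for $n\ge N_0(k,m)$, and a finite computation for the rest. That yields, at best, a separate finite verification for each of infinitely many $k$, with a window of size $B_0(k)=2k^{11}+\tfrac{k}{24}$ that grows without bound (already $\approx 8.4\cdot 10^{6}$ for $k=4$ and $\approx 9.8\cdot 10^{7}$ for $k=5$; the corresponding checks in \cite{BKRT20} took $5$ and $71$ days of computation). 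Nothing in your proposal makes the argument uniform in $k$, so the "finite verification" never terminates as a proof of the full conjecture. The missing idea, which is how the result is actually obtained in \cite{BKRT20} and as recounted in the concluding remarks of this paper, is Hoggar's theorem \cite{Ho74}: the convolution of two positive log-concave sequences is log-concave, combined with the identity that $\{P_n(x_1+x_2)\}$ is the convolution of $\{P_n(x_1)\}$ and $\{P_n(x_2)\}$. One verifies log-concavity of $\{P_n(k)\}$ only for $k=2$ (for $n\ge 6$) and $k=3,4,5$ --- and only within the window $n\le B_0(k)$, since beyond it the asymptotic main theorem applies --- and then every $k\ge 6$ follows by writing $k$ as a sum of elements of $\{3,4,5\}$. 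Log-concavity for all $n$ then gives the full two-parameter CFT inequality via the equivalence $\Delta_{a,b}(k)\ge 0 \Longleftrightarrow \frac{P_{b+1}(k)}{P_b(k)}\ge \frac{P_a(k)}{P_{a-1}(k)}$, i.e.\ monotonicity of the ratios, rather than by your reverse specialization $b=a-2$.

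A secondary, more practical point: even for a single $k$, your residual check ranges over pairs $(n,m)$ with $m<B_0(k)$ and $n\le N_0(k,m)$, a two-parameter family. The reduction used in \cite{BKRT20} (and emphasized in Section 6 of this paper) is to check only that $n\mapsto P_n(k)/P_{n-1}(k)$ is decreasing for $n\le B_0(k)$, a one-parameter monotonicity check; this is what makes the $k=4,5$ computations barely feasible. Your outline of the large-$n$ regime via an effective Hardy--Ramanujan-type expansion is fine in spirit (it is essentially what Theorem \ref{BKRT} encodes), but without the Hoggar convolution step the argument cannot cover all $k$, and so does not prove the corollary.
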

In this paper we show that Conjecture $1$ and Conjecture $2$ are closely related to
the Bessenrodt--Ono inequality: $x \, P_{a-1}(x) \geq P_{a}(x)$.  The appearing rational function
$\frac{P_{b+1}(x)}{P_b(x)}$ will be approximated by a linear factor, depending on $b$. 

We prove Conjecture $2$ for $b \in \{0,1,2,3\}$, and all integers $a >b$ and all real numbers $x \geq x_0=2$.
Further, in the odd cases $b=1$ and $b=3$, Conjecture $2$ is already true for $x \geq 1$. 
To prove that $\Delta_{a,b}(x) \geq 0$, we study $\Delta_{a,b}(x_0) \geq 0$ and prove that $\Delta_{a,b}'(x)>0 $ for all $x > x_0$.
We believe that this approach is the most direct method to prove Conjecture 2.

The positivity of the derivative is expected, since $\Delta_{a,b}(x) >0$ for $x \geq x_0$ is
a statement on the largest real root $x_{a,b}$ of  $\Delta_{a,b}(x)$ and the observed property, that the real part of
the complex root seems to be smaller than $x_{a,b}$ (see Figure \ref{f1}).

\begin{minipage}{1.0\textwidth}
\begin{center}
\includegraphics[width=0.52\textwidth]{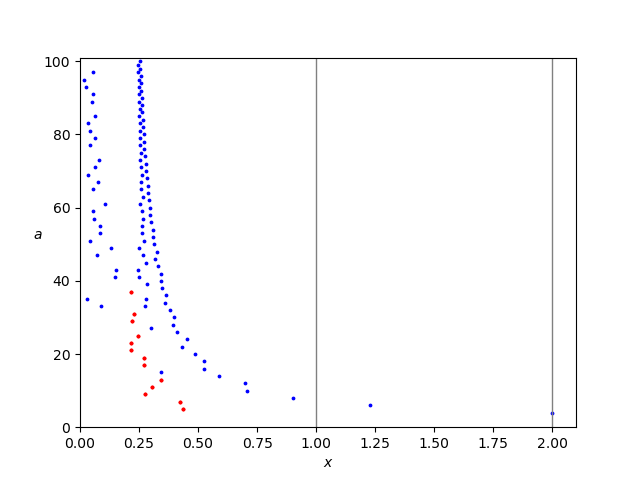}
\captionsetup{margin={0cm,0cm,0cm,0cm}}
\captionof{figure}{Roots of $\Delta_{a,2}(x)$ with a positive real part. Blue = real root, red = complex root.} \label{f1}
\end{center}
\end{minipage}

As already mentioned, the case $b=0$ has been almost proved in \cite{HNT20}. The complete statement and proof is given in
Section 2.
In Section 3 we prove the following results.

\begin{theorem} \label{main}
Let $ a \in \mathbb{N}$, $b \in \{1,2,3\}$ and $x \in \mathbb{R}$.
For $b$ odd we put $x_0:=1$ and for $b$ even we put $x_0:=2$.
Let $a_0= a_0(b):= b+2$. Then
\begin{equation}
\Delta_{a,b}(x) = P_{a-1}(x) \, P_{b+1}(x) - P_a(x) P_b(x) > 0.
\end{equation}
for all $a \geq a_0$ and $x > x_0$.
\end{theorem}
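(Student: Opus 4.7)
The plan follows the two-step scheme outlined by the authors in the paragraphs preceding the theorem: for each $a \geq a_0 = b + 2$, first verify the boundary inequality $\Delta_{a, b}(x_0) \geq 0$, then show the strict monotonicity $\Delta_{a, b}'(x) > 0$ for every $x > x_0$. Together these give $\Delta_{a, b}(x) > 0$ for $x > x_0$.

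The boundary step reduces to a statement about integer values of partition functions. For $b$ odd we have $x_0 = 1$ and $P_n(1) = p(n)$: the case $b = 1$ is the classical ratio inequality $p(a) \leq 2\,p(a - 1)$ for $a \geq 3$, and the case $b = 3$ is the analogous $3\,p(a) \leq 5\,p(a - 1)$ for $a \geq 5$. For $b = 2$ we have $x_0 = 2$ and $P_n(2) = p_{-2}(n)$, so
\[
\Delta_{a, 2}(2) \;=\; 5\bigl(2\,p_{-2}(a - 1) - p_{-2}(a)\bigr)
\]
is exactly Chern--Fu--Tang's inequality (Conjecture 1) applied with $(n, m) = (a, 2)$, established in \cite{BKRT20}. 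Note that equality holds at $a = 4$, so the derivative argument will have to start from a closed endpoint in that case.

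For the derivative step, I follow the strategy highlighted in the introduction and write
\[
\Delta_{a, b}(x) \;=\; P_b(x)\,\bigl[L_b(x)\,P_{a - 1}(x) - P_a(x)\bigr] \;+\; P_{a - 1}(x)\,\bigl[P_{b + 1}(x) - L_b(x)\,P_b(x)\bigr],
\]
where $L_b(x)$ is a linear approximation to $P_{b + 1}(x)/P_b(x)$: exact and equal to $(x + 3)/2$ for $b = 1$, and for $b = 2, 3$ a one-sided linear bound valid on $[x_0, \infty)$ after clearing the denominator of the properly rational quotient. Differentiating and using the standard recursions
\[
n\,P_n(x) = x\sum_{k = 1}^{n}\sigma(k)\,P_{n - k}(x), \qquad P_n'(x) = \sum_{k = 1}^{n} \frac{\sigma(k)}{k}\,P_{n - k}(x),
\]
the quantity $\Delta_{a, b}'(x)$ expands as a polynomial combination of the $P_{a - k}(x)$ in which only one subtractive term involves $P_a(x)$. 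That term is absorbed via the polynomial Bessenrodt--Ono inequality $x\,P_{a - 1}(x) \geq P_a(x)$ of \cite{HNT20} (valid for $x \geq 2$, supplemented by a direct check on $[1, 2]$ in the odd cases), and the remainder is manifestly nonnegative for $x > x_0$.

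The principal obstacle is controlling positivity when $a$ is close to the boundary value $a_0 = b + 2$, because the leading positive contribution $\frac{x\,(a - b - 1)}{a\,(b + 1)}\,P_{a - 1}(x)\,P_b(x)$ from the recursion dominates the error only when $a$ is sufficiently large. I therefore plan to dispatch the initial values $a \in \{b + 2, b + 3\}$ by direct polynomial factorization, in the spirit of the $b = 0$ computation of Section 2, and then close the argument by induction on $a$ using the Bessenrodt--Ono estimate. The case $b = 3$ is expected to be technically the heaviest, since $P_4(x)/P_3(x) = (x + 3)(x + 14)/(4(x + 8))$ is genuinely rational; one must first multiply through by $4(x + 8) > 0$ to place the inequality into polynomial form before the linear approximation can be applied.
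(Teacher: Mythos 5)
Your boundary step is sound and essentially matches the paper's: at $x_0=1$ the inequalities $2p(a-1)\geq p(a)$ and $5p(a-1)\geq 3p(a)$ follow from the monotonicity properties of $q(n)=p(n)/p(n-1)$ (the paper's Proposition \ref{exceptions} and Corollary \ref{b}), and at $x_0=2$ the paper even avoids invoking \cite{BKRT20} by noting $P_3(2)/P_2(2)=2$ and reducing to $\Delta_{a,0}(2)\geq 0$, with the equality case $\Delta_{4,2}(2)=0$ checked by hand. Your decomposition $\Delta_{a,b}=P_b\,[L_b P_{a-1}-P_a]+P_{a-1}\,[P_{b+1}-L_b P_b]$ with a linear $L_b$ is also exactly the paper's device (with $L_1=\frac{x+3}{2}$, $L_2=\frac{x+4}{3}$, $L_3=\frac{3x+17}{12}$).

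The genuine gap is in the monotonicity step. You claim that after expanding $\Delta_{a,b}'$ (or $F_a'$) with $P_n'=\sum_{k=1}^n\frac{\sigma(k)}{k}P_{n-k}$, the single troublesome term can be absorbed by the Bessenrodt--Ono inequality $x\,P_{a-1}(x)\geq P_a(x)$ and "the remainder is manifestly nonnegative." This does not work. First, what is actually needed is $L_b(x)\,P_{a-1}(x)\geq P_a(x)$, and since $L_b(x)<x$ on essentially the whole range considered (e.g.\ $\frac{x+4}{3}<x$ for $x>2$), the Bessenrodt--Ono bound is strictly weaker than the inequality you are trying to prove; it cannot absorb the term, and the comparison of the two sums $L_b\sum_k\frac{\sigma(k)}{k}P_{a-1-k}$ versus $\sum_k\frac{\sigma(k)}{k}P_{a-k}$ is not "manifest" --- it is the whole difficulty. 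Second, $x\,P_{a-1}(x)\geq P_a(x)$ is in fact false on part of $(1,2)$ (the largest real root of $\Delta_{a,0}$ lies strictly between $1$ and $2$ for $a\geq 5$, cf.\ Lemma 2.4), so the proposed "direct check on $[1,2]$ in the odd cases" cannot succeed. The paper closes this gap differently: it runs an induction on $a$ whose hypothesis is the statement itself for smaller indices, namely $F_m(x)=L_b(x)P_{m-1}(x)-P_m(x)>0$ for all $b+2\leq m<a$, which dominates all but the last $b+2$ terms of the expansion of $P_a'$ termwise; the leftover boundary terms are then controlled by the explicit small polynomials $P_0,\dots,P_{b+1}$, the bound $\sigma(a)<a(1+\ln a)$, and a lower bound on the coefficients of $P_{a-1}$ from Kostant's formula (\ref{eq:kostant}). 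This asymptotic estimate only takes over for $a\geq 14$ (resp.\ $15$), so one must also verify $F_a>0$ directly for all small $a$ up to that threshold, not merely for $a\in\{b+2,b+3\}$ as you propose. Finally, note that $\Delta_{a,b}'(x)>0$ is not an easier intermediate step: in the paper it is Corollary \ref{ableitungdeltapositiv}, deduced \emph{after} the theorem from the positivity of $F_a$ and $F_a'$.
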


The cases $x=x_{0}$ will be stated in Proposition ~\ref{prop}
and Corollary \ref{b}.
There the strict inequality does not hold in general. 
It fails for example for $(a,b,x_0) =( 4,2,2)$. Further, we obtain:
\begin{corollary}
\label{ableitungdeltapositiv}Let
$b \in \{1,2,3\}$ be given. Then $\Delta_{a,b}'(x) >0$ for all $a \geq a_0$ and
$x > x_0$.
\end{corollary}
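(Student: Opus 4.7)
The plan is to derive this corollary as a by-product of the proof of Theorem~\ref{main}. As announced in the introduction, our path to $\Delta_{a,b}(x)>0$ on $(x_0,\infty)$ decomposes into two ingredients: first, the boundary inequality $\Delta_{a,b}(x_0)\geq 0$, and second, the strict monotonicity $\Delta_{a,b}'(x)>0$ for $x>x_0$. Corollary~\ref{ableitungdeltapositiv} is precisely the second ingredient, so the task reduces to isolating and stating this step from the argument carried out for Theorem~\ref{main}.

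To establish the derivative bound, first apply the product rule to
\[
\Delta_{a,b}(x) = P_{a-1}(x)P_{b+1}(x) - P_a(x) P_b(x)
\]
and then expand each $P_n'(x)$ using the convolution identity
\[
P_n'(x) \;=\; \sum_{k=1}^{n} \frac{\sigma(k)}{k}\, P_{n-k}(x),
\]
which is obtained by differentiating the generating function $\sum_n P_n(x) q^n = \prod_{m \geq 1}(1-q^m)^{-x}$ with respect to $x$. This rewrites $\Delta_{a,b}'(x)$ as a weighted sum, with positive weights $\sigma(k)/k$, of expressions of the form $P_{a-1-j}(x)P_{b+1}(x)-P_{a-j}(x)P_b(x)$ and $P_{a-1}(x)P_{b+1-j}(x)-P_a(x)P_{b-j}(x)$. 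Since $b\in\{1,2,3\}$ is fixed and small, the polynomials $P_0,\dots,P_{b+1}$ are available in closed form; moreover the rational function $P_{b+1}(x)/P_b(x)$ is approximated by a linear factor in $x$, as advertised at the start of this section. Combining these explicit evaluations with the Bessenrodt--Ono-type inequality $x\,P_{n-1}(x)\geq P_n(x)$, which controls the other ratio $P_a(x)/P_{a-1}(x)$, reduces the positivity of $\Delta_{a,b}'(x)$ to a finite and manageable computation for each value of $b$.

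The main obstacle is the borderline even case $b=2$, $x_0=2$, where the Bessenrodt--Ono inequality is tight and leaves no slack at the endpoint. Here the strict positivity for $x>2$ must be extracted from the explicit form $P_2(x)=x(x+3)/2$ together with the expansion of $P_3(x)$. The odd cases $b\in\{1,3\}$ are more forgiving because the corresponding ratio bounds extend below $x=2$, which is precisely what permits the smaller threshold $x_0=1$ in those cases. Once the three cases are handled in this explicit fashion, $\Delta_{a,b}'(x)>0$ follows for all $a\geq a_0$ and $x>x_0$, giving both the corollary and the derivative half of Theorem~\ref{main}.
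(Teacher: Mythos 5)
There is a genuine gap. Your expansion of $\Delta_{a,b}'(x)$ via the product rule and $P_n'(x)=\sum_{k}\frac{\sigma(k)}{k}P_{n-k}(x)$ is a valid identity: it writes $\Delta_{a,b}'(x)$ as $\sum_{k=1}^{a-1}\frac{\sigma(k)}{k}\Delta_{a-k,b}(x)+\sum_{k=1}^{b}\frac{\sigma(k)}{k}\Delta_{a,b-k}(x)+\frac{\sigma(b+1)}{b+1}P_{a-1}(x)-\frac{\sigma(a)}{a}P_{b}(x)$. But the step from there to positivity is not carried out, and the claim that it ``reduces to a finite and manageable computation for each value of $b$'' is not true: $a$ ranges over all integers $\geq a_0$, so some uniform-in-$a$ argument (an induction on $a$, or an appeal to Theorem~\ref{main} for the smaller indices) is indispensable, and even granting Theorem~\ref{main}, the expansion contains terms that are genuinely negative and must be beaten by the lone positive boundary term $\frac{\sigma(b+1)}{b+1}P_{a-1}(x)$. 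Concretely: the terms $\Delta_{a-k,b}(x)$ with $a-k\leq b+1$ lie outside the range of the theorem and include quantities like $P_0P_{b+1}-P_1P_b$, which is negative for the relevant $x$; the term $-\frac{\sigma(a)}{a}P_b(x)$ is negative and grows like $\ln a$; and in the odd cases $b=1,3$ with $x_0=1$ the term $\Delta_{a,0}(x)$ is negative for $x$ just above $1$ (its largest real root $x_{a,0}$ lies in $(1,2)$, e.g.\ $x_{5,0}\approx 1.69$). Controlling all of this is exactly the technical content the paper supplies via the estimate $\sigma(n)<n(1+\ln n)$ and Kostant-formula lower bounds on the coefficients of $P_{a-1}$; none of it appears in your sketch.

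A second concrete error: the inequality $x\,P_{n-1}(x)\geq P_n(x)$ that you invoke to ``control $P_a(x)/P_{a-1}(x)$'' is precisely $\Delta_{n,0}(x)\geq 0$, which fails for small $n$ and moderate $x$ (for $n=2$ it requires $x\geq 3$, for $n=3,4$ it requires $x\geq 2$) and is unavailable on $(1,2)$ — the very interval where you need it in the cases $b=1,3$, so your remark that the odd cases are ``more forgiving'' because the ratio bounds extend below $x=2$ has it backwards. For comparison, the paper does not prove the corollary by expanding $\Delta_{a,b}'$ directly: it introduces $F_a(x)=G_b(x)P_{a-1}(x)-P_a(x)$ with a linear $G_b$, proves $F_a(x)>0$ and $F_a'(x)>0$ for $x>x_0$ by induction on $a$ (that is where the convolution identity and the $\ln a$ estimates are used), bounds $\Delta_{a,b}(x)\geq P_b^{*}(x)F_a(x)$ with an explicit positive factor, and then reads off $\Delta_{a,b}'(x)>0$ for large $a$ from the product rule applied to that factorization, checking the finitely many small $a$ directly. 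If you want to salvage your route, you must (i) set up the induction or cite Theorem~\ref{main} explicitly for the terms $\Delta_{a-k,b}$ and $\Delta_{a,b-k}$ in their valid ranges, and (ii) give quantitative bounds, uniform in $a$, for the negative contributions listed above.
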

In Section $4$ we outline a program to attack all cases of Conjecture 2. Finally, in Section $5$, we
provide some numerical data. All computations have been done using PARI/GP or Julia.

\section{The polynomials $\Delta_{a,b}(x)$ and $P_{a,b}(x)$}
There is a direct connection between the polynomialized Chern--Fu--Tang inequality (\ref{co}) and the
Bessenrodt--Ono inequality (\ref{BO}).
Let $a \geq 1 $, then
\begin{equation}
\Delta_{a,0}(x) = P_{a-1}(x) P_1(x) - P_a(x) P_{0}(x) = P_{a-1,1}(x).
\end{equation}
In the following we assume that $a \geq 2$, since $\Delta_{1,0}(x)$ = 0.
It is obvious that $\Delta_{a,0}(0) =0$, $\Delta_{a,0}'(0) <0$, and
that $\lim_{x \rightarrow \infty} \Delta_{a,0}(x) = \infty$.
Let us record the first polynomials and several properties. Let $Z_n$ be the set of roots of $\Delta_{a,0}$ and 
$x_{a,0}$ be the largest real root.
\begin{table}[H]
\begin{equation*}
\begin{array}{r@{\, \cdot \,}lll}
2 & \Delta_{2,0}(x) = x (x-3)       &           Z_2 = \{ 0, 3\}    & x_{2,0} = 3 \\
3 & \Delta_{3,0}(x) = x (x^2 -4x)   &       Z_3 = \{ -2,0,2 \}     & x_{3,0} = 2 \\
8 & \Delta_{4,0}(x) = x \left( x^3 + 6 x^2 + 9x -14\right)  &  Z_4 = \{ -7,-1,0,2 \} & x_{4,0} = 2 \\
30 & \Delta_{5,0}(x) = x \left( x^4 + 15 x^3 + 20 x^2 -60x -36 \right) &  
Z_5 = \{ \ldots , 0, x_{5,0}\} & x_{5,0} \approx 1.69 \\
\end{array}
\end{equation*}
\caption{Polynomials $\Delta _{a,0}\left( x\right) $, their sets of roots, and  largest real roots.}
\end{table}
\begin{theorem}[\cite{HNT20}]
Let $a >2$. Then for all $x>2$ we have the property
\begin{equation}
\Delta_{a,0}(x) = P_{a-1,1}(x) >0.
\end{equation}
Let $a=2$. Then $\Delta_{2,0}(3)=0$ and for all $x >3$ we have the strict inequality $\Delta_{2,0}(x) >0$.
We further have $\Delta_{3,0}(2) = \Delta_{4,0}(2)=0$. Let $a >4$ and $x \geq 2$, then  we have
$\Delta_{a,0}(x)>0$.
\end{theorem}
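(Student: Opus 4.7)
The plan is to recognize $\Delta_{a,0}$ as a specialization of the Bessenrodt--Ono polynomial $P_{a-1,1}$, invoke (\ref{BO}) for $x > 2$, and handle the remaining small-$a$ and boundary-$x$ cases by direct computation. Since $P_0(x)=1$ and $P_1(x)=x$, the definition yields
\begin{equation*}
\Delta_{a,0}(x)=x\,P_{a-1}(x)-P_a(x)=P_{a-1,1}(x),
\end{equation*}
so the strict positivity of $\Delta_{a,0}(x)$ for $a\geq 3$ and $x>2$ is an immediate instance of (\ref{BO}) as proved in \cite{HNT20} (the index condition $(a-1)+1>2$ holds as soon as $a\geq 2$).

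For the low-degree and vanishing statements I would compute $P_2$, $P_3$, $P_4$ explicitly from the recursion $n\,P_n(x)=x\sum_{k=1}^{n}\sigma_1(k)\,P_{n-k}(x)$, which arises from logarithmic differentiation of $\prod(1-q^n)^{-z}$. This yields the closed form $\Delta_{2,0}(x)=\tfrac{x(x-3)}{2}$ at once, giving the vanishing at $x=3$ and strict positivity for $x>3$. Analogous factorizations of $\Delta_{3,0}$ and $\Delta_{4,0}$ exhibit $x-2$ as a factor, proving $\Delta_{3,0}(2)=\Delta_{4,0}(2)=0$ and showing that the other real roots lie below $2$.

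The main obstacle is the strict inequality $\Delta_{a,0}(2)>0$ for every integer $a\geq 5$. Inequality (\ref{BO}) degenerates at $x=2$, so continuity in $x$ only gives $\Delta_{a,0}(2)\geq 0$; the vanishing at $a\in\{3,4\}$ shows this boundary is genuinely delicate. The claim is equivalent to the arithmetic inequality
\begin{equation*}
p_{-2}(a)<2\,p_{-2}(a-1),\qquad a\geq 5,
\end{equation*}
for $2$-colored partitions. My plan is to verify the base case $a=5$ by hand and then propagate the inequality for $a>5$ via the recursion
\begin{equation*}
n\,p_{-2}(n)=2\sum_{k=1}^{n}\sigma_1(k)\,p_{-2}(n-k),
\end{equation*}
equivalently, by showing that the coefficients of the formal series $(1-2q)\prod_{n\geq 1}(1-q^n)^{-2}$ are strictly negative from degree $5$ onward. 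Controlling the tail sum $\sum_{k\geq 2}\sigma_1(k)\,p_{-2}(n-k)$ against the dominant contribution $\sigma_1(1)\,p_{-2}(n-1)$ carefully enough to preserve the strict sign is the technical heart of the argument, and the step I expect to be the hardest; naive bounds only reproduce the non-strict inequality that continuity already provides.
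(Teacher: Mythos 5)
Your reduction of the case $x>2$ to inequality (\ref{BO}) is exactly how the paper views this statement (cf.\ remark (d) in the introduction): with $P_0(x)=1$, $P_1(x)=x$ one has $\Delta_{a,0}(x)=xP_{a-1}(x)-P_a(x)=P_{a-1,1}(x)$, and (\ref{BO}) applied to the pair $(a-1,1)$ gives strict positivity for $x>2$ once $a>2$ (note the index condition is $a>2$, i.e.\ $a\geq 3$, not $a\geq 2$ as your parenthesis says — harmless here, since that is all the theorem claims). The explicit factorizations for $a=2,3,4$ are also fine and agree with Table 1. The problem is the last clause of the theorem, which includes the boundary point: $\Delta_{a,0}(2)>0$ for all $a\geq 5$, equivalently $2p_{-2}(a-1)>p_{-2}(a)$. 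This is precisely the content that does not follow from (\ref{BO}) plus continuity (which only yields $\geq 0$, and the vanishing at $a=3,4$ shows why), and your proposal does not prove it: you state a plan (base case $a=5$, then propagation through the recursion $n\,p_{-2}(n)=2\sum_k\sigma(k)p_{-2}(n-k)$) and yourself concede that the tail estimate needed to keep the strict sign is unresolved. Note also that a naive propagation of $p_{-2}(a)<2p_{-2}(a-1)$ from $a$ to $a+1$ implicitly needs the ratio $p_{-2}(a+1)/p_{-2}(a)$ to be no larger than $p_{-2}(a)/p_{-2}(a-1)$, i.e.\ log-concavity of $p_{-2}$, which is a Chern--Fu--Tang-type statement and cannot be assumed here. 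So as written the argument establishes the theorem only for $x>2$ (and the small-$a$ identities), not the full claim.

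For comparison: the paper itself gives no proof of this theorem — it is quoted from \cite{HNT20} — and the mechanism used there (and imitated in Section 3 of this paper for $b\in\{1,2,3\}$) is not a continuity argument at $x=2$ but an induction on $a$ carried out for all $x\geq 2$ at once, using the derivative formula (\ref{derivative}) to show that $F_a(x)=xP_{a-1}(x)-P_a(x)$ has positive derivative (this is what the paper extracts as $\Delta_{a,0}'(x)>0$ for $x\geq 2$ from the proof of Proposition 5.1 of \cite{HNT20}), combined with control of the value at the left endpoint; the strict inequality at $x=2$ for $a\geq 5$ comes out of that induction together with finitely many explicit checks (e.g.\ $x_{5,0}\approx 1.69<2$), not from a limiting argument. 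If you want to complete your route, you should either adopt that induction-with-derivative scheme, or genuinely carry out the quantitative tail bound you defer — e.g.\ bounding $\sum_{k\geq 2}\sigma(k)p_{-2}(n-k)$ against $\sigma(1)p_{-2}(n-1)$ using $\sigma(k)\leq k(1+\ln k)$ together with effective upper bounds on the ratios $p_{-2}(n-k)/p_{-2}(n-1)$ — which is exactly the missing technical heart, not a routine verification.
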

We deduce from (\cite{HNT20}, proof of Proposition 5.1) the following result.
\begin{corollary}
Let $a >2$ and $x \geq 2$. Then $\Delta_{a,0}'(x)>0$. 
\end{corollary}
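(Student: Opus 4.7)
The plan is to prove the corollary directly via a recursion for $\Delta_{a,0}'(x)$ in terms of earlier members of the same family, so that the preceding theorem supplies the needed positivity. Starting from $\Delta_{a,0}(x)=xP_{a-1}(x)-P_a(x)$, one has
\begin{equation*}
\Delta_{a,0}'(x) \,=\, P_{a-1}(x)+x\,P_{a-1}'(x)-P_a'(x).
\end{equation*}
Differentiating $\prod_{n\geq 1}(1-q^n)^{-z}$ in $z$ (as carried out in HNT20) yields the standard identity $P_n'(z)=\sum_{k=1}^{n}\sigma_{-1}(k)\,P_{n-k}(z)$, where $\sigma_{-1}(k)=\sum_{d\mid k}1/d$. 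Substituting this into the expression for $\Delta_{a,0}'(x)$ and regrouping by the coefficient $\sigma_{-1}(k)$ gives the key recursion
\begin{equation*}
\Delta_{a,0}'(x) \,=\, x\,P_{a-2}(x)+\sum_{k=2}^{a-2}\sigma_{-1}(k)\,\Delta_{a-k,0}(x)-\sigma_{-1}(a),
\end{equation*}
where the would-be $k=a-1$ summand drops out because $\Delta_{1,0}\equiv 0$.

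Given this identity, the derivative is a positive combination of earlier instances of $\Delta_{c,0}$ plus a controllable error. All summands with index $a-k\geq 3$ are nonnegative on $[2,\infty)$ by the preceding theorem. The single potentially negative contribution comes from $k=a-2$, namely $\sigma_{-1}(a-2)\,\Delta_{2,0}(x)$; but $\Delta_{2,0}(x)=x(x-3)/2\geq -1$ on $[2,\infty)$. Combined with $xP_{a-2}(x)\geq 2\,p_{-2}(a-2)$ (the coefficients of $P_{a-2}$ being nonnegative), one obtains
\begin{equation*}
\Delta_{a,0}'(x) \,\geq\, 2\,p_{-2}(a-2)-\sigma_{-1}(a-2)-\sigma_{-1}(a) \qquad (a\geq 4,\ x\geq 2),
\end{equation*}
while the case $a=3$ is handled directly by $\Delta_{3,0}'(x)=x^{2}-4/3\geq 8/3>0$. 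The right-hand side above is strictly positive for every $a\geq 4$: the small cases $a=4,5,6$ can be checked numerically (giving $27/4$, $262/15$, etc.), and for $a\geq 7$ one invokes the Hardy--Ramanujan-type growth of $p_{-2}(a-2)$ against the trivial estimate $\sigma_{-1}(n)\leq 1+\log n$.

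The main obstacle I expect is not an individual step but the bookkeeping that produces the displayed recursion: collapsing the two nested sums coming from $xP_{a-1}'$ and $P_a'$ into a single sum of $\sigma_{-1}(k)\,\Delta_{a-k,0}(x)$ terms requires an index shift, and an off-by-one here would corrupt the subsequent positivity bound. A second, milder subtlety is the boundary point $x=2$: since $\Delta_{2,0}(2)=-1$ is negative, one cannot just apply the preceding theorem blindly to every term on the right-hand side, and the loss of $\sigma_{-1}(a-2)$ in the bound above is precisely the price paid for that. Once these two bookkeeping points are settled, the positivity conclusion is forced by the preceding theorem together with a single elementary numerical inequality.
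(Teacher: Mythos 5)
Your proof is correct, and the bookkeeping you were worried about checks out: writing $\Delta_{a,0}'(x)=P_{a-1}(x)+xP_{a-1}'(x)-P_a'(x)$, inserting $P_n'(x)=\sum_{k=1}^n\frac{\sigma(k)}{k}P_{n-k}(x)$, and regrouping gives $\sum_{k=1}^{a-1}\frac{\sigma(k)}{k}\bigl(xP_{a-1-k}(x)-P_{a-k}(x)\bigr)-\frac{\sigma(a)}{a}$ plus $P_{a-1}(x)$; the $k=1$ term absorbs $P_{a-1}(x)$ into $xP_{a-2}(x)$ and the $k=a-1$ term vanishes since $\Delta_{1,0}\equiv 0$, which is exactly your displayed recursion. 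The nonnegativity of $\Delta_{m,0}$ for $m\geq 3$, $x\geq 2$ (including the boundary values $\Delta_{3,0}(2)=\Delta_{4,0}(2)=0$) is indeed supplied by the quoted theorem, your handling of the one genuinely negative term $\frac{\sigma(a-2)}{a-2}\Delta_{2,0}(x)\geq-\frac{\sigma(a-2)}{a-2}$ is right, and the closing numerical inequality $2p_{-2}(a-2)>\frac{\sigma(a-2)}{a-2}+\frac{\sigma(a)}{a}$ holds for all $a\geq 4$ (your values $27/4$ and $262/15$ are correct; in fact $p_{-2}(a-2)\geq a-1>1+\ln a$ already suffices, so no Hardy--Ramanujan input is needed). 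Note, however, that this is a genuinely different route from the paper: the paper gives no argument at all here, deducing the corollary directly from the proof of Proposition 5.1 in \cite{HNT20}, which works with $P_{a-1,1}(x)=xP_{a-1}(x)-P_a(x)$ there. Your argument is instead self-contained within this paper, and it is close in spirit to the induction steps later used for $b=1,2,3$ and the general scheme of Section 4 (same derivative identity, same regrouping into lower-index differences), with the pleasant simplification that you need no auxiliary induction on a family $F_a$, because the nonnegativity of the lower-index $\Delta_{m,0}$ is already available from the cited theorem.
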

Further, we have:
\begin{lemma}
Let $a \geq 5$. Then there exists a real number $\alpha $, $1 < \alpha <2$, such that $\Delta_{a,0}(\alpha)=0$.
Let $x_{a,0}$ be the largest real root of $\Delta_{a,0}(x)$. Then $ 1 < x_{a,0} < 2$ and $ \Delta_{a,0}(x) >0$ for all $x > x_{a,0}$.
\end{lemma}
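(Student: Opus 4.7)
My plan is to combine two endpoint sign evaluations with the Intermediate Value Theorem and then invoke the theorem of \cite{HNT20} recalled just above to identify the resulting root as the largest real one.

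The one genuinely new input is an evaluation at $x=1$. Since $P_{0}(1)=P_{1}(1)=1$, we have $\Delta_{a,0}(1)=P_{a-1}(1)-P_{a}(1)$. Specializing $z=1$ in the defining generating function
\begin{equation*}
\sum_{n=0}^{\infty}P_{n}(z)\,q^{n}=\prod_{n=1}^{\infty}(1-q^{n})^{-z}
\end{equation*}
gives $P_{n}(1)=p(n)$, the ordinary partition function (consistent with $p_{-k}(n)=P_{n}(k)$), so
\begin{equation*}
\Delta_{a,0}(1)=p(a-1)-p(a)<0\qquad\text{for every }a\geq 5,
\end{equation*}
by strict monotonicity of $p$ on $\{n\geq 2\}$.

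The theorem cited immediately above supplies the complementary sign $\Delta_{a,0}(2)>0$ (valid for $a>4$), so continuity of the polynomial and the Intermediate Value Theorem on $[1,2]$ yield some $\alpha\in(1,2)$ with $\Delta_{a,0}(\alpha)=0$. That same theorem asserts strict positivity on all of $[2,\infty)$, which forbids any real root there, so the largest real root $x_{a,0}$ satisfies $\alpha\leq x_{a,0}<2$, whence $1<x_{a,0}<2$. For the final assertion, $\Delta_{a,0}$ has no zeros on $(x_{a,0},\infty)$ by definition of $x_{a,0}$ and hence is of constant sign there by continuity; since $2\in(x_{a,0},\infty)$ and $\Delta_{a,0}(2)>0$, the sign is positive throughout.

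Candidly, I do not anticipate a substantive obstacle: once one notices the elementary identity $\Delta_{a,0}(1)=p(a-1)-p(a)$, everything else is a direct consequence of the results already recorded in the excerpt. The only point that truly warrants verification is the specialization $P_{n}(1)=p(n)$, which follows immediately from the defining $q$-expansion.
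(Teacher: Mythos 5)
Your proposal is correct: the evaluation $\Delta_{a,0}(1)=P_{a-1}(1)-P_{a}(1)=p(a-1)-p(a)<0$ for $a\geq 5$, combined with $\Delta_{a,0}(2)>0$ from the quoted theorem of \cite{HNT20}, the Intermediate Value Theorem, and the positivity on $[2,\infty)$ to locate the largest real root in $(1,2)$ and force positivity beyond it, is exactly the natural argument. The paper states this lemma without giving a proof, and your write-up is essentially the intended one; the only ingredient worth flagging, the specialization $P_{n}(1)=p(n)$, is indeed immediate from the generating function (equivalently from $p_{-k}(n)=P_{n}(k)$ with $k=1$).
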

For $b \in \{ 0,1,2 \}$ we have the following useful property.
\begin{proposition}\label{prop}
Let $x_0 =2$ and let $b \in \{ 0,1,2 \}$. Then $\Delta_{a,b}(x_0)>0$ for $a \geq 5$. Let $b=1$, then
this is already true for $a \geq 3$. The bounds for $b=0$ and $b=2$ are sharp.
\end{proposition}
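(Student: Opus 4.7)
The plan is to reduce the $b=1$ and $b=2$ cases algebraically to the already-proved $b=0$ case. Since $P_{n}(2)=p_{-2}(n)$ by definition of the polynomials, and the first two-colored partition numbers read off from the generating function are $p_{-2}(0)=1$, $p_{-2}(1)=2$, $p_{-2}(2)=5$, $p_{-2}(3)=10$, substitution yields
\begin{align*}
\Delta_{a,0}(2) &= 2\,p_{-2}(a-1) - p_{-2}(a), \\
\Delta_{a,1}(2) &= 5\,p_{-2}(a-1) - 2\,p_{-2}(a) \;=\; 2\,\Delta_{a,0}(2) + p_{-2}(a-1), \\
\Delta_{a,2}(2) &= 10\,p_{-2}(a-1) - 5\,p_{-2}(a) \;=\; 5\,\Delta_{a,0}(2).
\end{align*}
The two right-most identities are the entire content of the reduction.

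From the theorem of \cite{HNT20} recalled just above, one has $\Delta_{a,0}(2)>0$ for $a\geq 5$, while $\Delta_{3,0}(2)=\Delta_{4,0}(2)=0$. For $b=2$, the identity $\Delta_{a,2}(2)=5\,\Delta_{a,0}(2)$ transfers the positivity for $a\geq 5$ and the sharpness $\Delta_{4,2}(2)=0$ verbatim. For $b=1$, the identity $\Delta_{a,1}(2)=2\,\Delta_{a,0}(2)+p_{-2}(a-1)\geq p_{-2}(a-1)>0$ holds for every $a\geq 3$, with the first summand vanishing precisely at $a=3,4$, where the claim reduces to $\Delta_{a,1}(2)=p_{-2}(a-1)>0$. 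The case $b=0$ is exactly what is quoted.

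I foresee no real obstacle: the only creative step is spotting the two linear combinations, and it becomes visible the moment one writes out the three expressions with the small $p_{-2}$ values substituted. In particular, no asymptotic input on the ratio $p_{-2}(a)/p_{-2}(a-1)$, and hence no appeal to log-concavity of $p_{-2}$ via the full Chern--Fu--Tang theorem of \cite{BKRT20}, is needed for this base-value statement; all of the analytic weight has already been absorbed into the polynomialized Bessenrodt--Ono inequality encoded in the previous theorem.
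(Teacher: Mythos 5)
Your proof is correct and rests on the same reduction the paper itself uses: comparing $P_{b+1}(2)/P_b(2)$ with $2$ so that everything follows from the quoted result of \cite{HNT20} that $\Delta_{a,0}(2)>0$ for $a\geq 5$ while $\Delta_{3,0}(2)=\Delta_{4,0}(2)=0$. The difference is that you work with the exact identities $\Delta_{a,2}(2)=5\,\Delta_{a,0}(2)$ and $\Delta_{a,1}(2)=2\,\Delta_{a,0}(2)+P_{a-1}(2)$ rather than the one-sided estimate $\Delta_{a,b}(2)\geq P_b(2)\,\Delta_{a,0}(2)$; the strictly positive leftover term $P_{a-1}(2)$ settles $b=1$ for $a=3,4$ automatically, whereas the paper must write out $\Delta_{3,1}(x)$ and $\Delta_{4,1}(x)$ explicitly to cover those two cases. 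Your identities also pin down the sharpness value exactly: $\Delta_{4,2}(2)=5\,\Delta_{4,0}(2)=0$, confirmed directly by $P_3(2)^2-P_4(2)P_2(2)=100-100=0$. Note that the paper's own proof asserts $\Delta_{4,2}(2)<0$, which is a small error (the correct value is $0$), though this does not affect the conclusion that the bound $a\geq 5$ is sharp for $b=2$, since strict positivity still fails at $a=4$.
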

\begin{proof}
The following quotients are all larger or equal to $x_0$.
Let $b \in \{0,1,2\}$. Then $\frac{P_{b+1}(x_0)}{P_{b}(x_0)} \geq x_0$:
$$ \frac{P_1(x_0)}{P_0(x_0)} = x_0, \quad 
\frac{P_2(x_0)}{P_1(x_0)} = \frac{5}{2} > x_0, \,\,   \text{ and } \,\,  \frac{P_3(x_0)}{P_2(x_0)} = 2 = x_0. $$
Thus, $\Delta_{a,b}(x_0) \geq P_{b}(x_0) \Delta_{a,0}(x_0)$ and $\Delta_{a,b}(x_0) > 0$ for $a \geq 5$.
The explicit shape and values of the involved polynomials for $a \leq 4$
complete the proof:
\begin{eqnarray*}
\Delta_{3,1}(x) & = & \frac{ x^2}{12} \left( x^2 +11 \right) \\
\Delta_{4,1} (x)& = & \frac{x^2}{24} \left( x^3 + 6x^2 + 11 x + 6 \right).
\end{eqnarray*}
We have $\Delta_{3,0}(x_0) = \Delta_{4,0}(x_0) = 0$ and $\Delta_{4,2}(x_0)<0$.
\end{proof}

\section{Log-concavity of partition numbers}
Nicolas \cite{Ni78} and DeSalvo and Pak \cite{DP15} proved the log-concavity of the
partition function $p(n)$ for $n \geq 26$:
\begin{equation}\label{lc}
p(n)^2 - p(n-1) p(n+1) \geq 0.
\end{equation}

Note that (\ref{lc})
fails for all $1 \leq n \leq 25$ odd, but is still true for $n$ even.
Explicit study of the small cases (Table \ref{firstcases}) leads to the following refined result:
\begin{proposition}\label{exceptions}
Let $q(n):= p(n)/p(n-1)$. 
Then $q(n+2) \leq q(n)$ for all $n \in \mathbb{N}$ and $q(27)  \geq q(n)$ for all $n \geq 27$.
For $n \leq 27$ we have the following chain:
\begin{eqnarray*}
q(2) &>& q(4) > q(6) > q(3) > q(8) > q(5)=q(10) > q(12) > q(7)=q(9) 
\\ &>&q(14) 
>q(11) > q(16) > q(13) >q(15) > q(18) > q(17) > q(20) \\ &>& q(19) >q(22) > q(21) > q(24) > q(23) > q(26) > q(25) > q(27).
\end{eqnarray*}
\end{proposition}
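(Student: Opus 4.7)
The plan is to separate the range of $n$ into two regimes. For large $n$, the log-concavity estimate (\ref{lc}) gives the result essentially for free; the serious work lies in establishing the explicit chain for the small values $n \leq 27$, which also furnishes $q(n+2) \leq q(n)$ in precisely the range not covered by log-concavity.

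First I would handle the large-$n$ regime. The inequality (\ref{lc}), valid for $n \geq 26$, rewrites as $q(n) \geq q(n+1)$, so the sequence $\bigl(q(n)\bigr)_{n \geq 27}$ is weakly decreasing. From this one extracts at once the bound $q(27) \geq q(n)$ for every $n \geq 27$ and the chain $q(n+2) \leq q(n+1) \leq q(n)$ for every $n \geq 26$.

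Next I would treat the small values. Using the standard table of partition numbers $p(0), p(1), \ldots, p(27)$, one writes each $q(n)$ for $2 \leq n \leq 27$ as an explicit rational number with small numerator and denominator. The displayed chain is then verified entry by entry: every relation $q(i) > q(j)$ or $q(i) = q(j)$ reduces to the comparison of two fractions, which is done by cross-multiplication. Once the chain is in place, the inequality $q(n+2) \leq q(n)$ for $2 \leq n \leq 25$ is read off by inspection, since for each such $n$ the symbol $q(n+2)$ appears to the right of $q(n)$ in the chain.

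The main obstacle is organizational rather than mathematical: the chain interleaves even and odd indices in an irregular pattern (for instance $q(3)$ appears between $q(6)$ and $q(8)$), so no general principle short-circuits the case-by-case verification. It is worth noting, however, that the two equalities $q(5) = q(10)$ and $q(7) = q(9)$ are isolated coincidences of the partition numbers at small indices; they are the reason one cannot improve the statement to a strict inequality, and they do not persist for larger $n$.
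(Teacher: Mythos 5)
Your proposal is correct and takes essentially the same route as the paper, which offers no formal proof beyond invoking the Nicolas/DeSalvo--Pak log-concavity (\ref{lc}) for $n \geq 26$ and the explicit values in Table \ref{firstcases}: that is exactly your two-regime argument of a weakly decreasing tail from $n=26$ onward plus cross-multiplication checks of the chain for $n \leq 27$. (One remark: at $n=1$ the asserted inequality $q(3) \leq q(1)$ in fact fails, since $q(3) = 3/2 > 1 = q(1)$, so the claim ``for all $n \in \mathbb{N}$'' must be read as $n \geq 2$ --- your coverage of $2 \leq n \leq 25$ together with $n \geq 26$ is precisely what is true and provable.)
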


\begin{table}[H]
\[
\begin{array}{|r|c||r|c|}
\hline
n & q\left( n\right) \approx & n & q\left( n \right) \approx \\ \hline \hline
1 & 1.00000000 & 16 & 1.31250000 \\ \hline
2 & 2.00000000 & 17 & 1.28571429 \\ \hline
3 & 1.50000000 & 18 & 1.29629630 \\ \hline
4 & 1.66666667 & 19 & 1.27272727 \\ \hline
5 & 1.40000000 & 20 & 1.27959184 \\ \hline
6 & 1.57142857 & 21 & 1.26315789 \\ \hline
7 & 1.36363636 & 22 & 1.26515152 \\ \hline
8 & 1.46666667 & 23 & 1.25249501 \\ \hline
9 & 1.36363636 & 24 & 1.25498008 \\ \hline
10 & 1.40000000 & 25 & 1.24317460 \\ \hline
11 & 1.33333333 & 26 & 1.24412666 \\ \hline
12 & 1.37500000 & 27 & 1.23563218 \\ \hline
13 & 1.31168831 & 28 & 1.23521595 \\ \hline
14 & 1.33663366 & 29 & 1.22781065 \\ \hline
15 & 1.30370370 & 30 & 1.22760131 \\ \hline
\end{array}
\]
\caption{Approximate values of $q\left( n \right) $ for $1\leq n \leq 30$.} \label{firstcases}
\end{table}
\begin{corollary}\label{b}
Let $a$ and $b$ be positive integers. Let $ a > b+1$. Then 
\begin{equation} \label{pos}
\Delta_{a,b}(1) \geq 0
\end{equation}
is true for all $b$ odd and for all $b \geq 26$.
For $ 1 < b \leq 26$ even 
we have the following
result. Inequality (\ref{pos}) is satisfied for
$a\in A_{0}\left( b\right) \cup \left\{ a\in \mathbb{N}:a\geq a_{1}\left( b\right) \right\} $
from the following Table \ref{T2}.
\begin{table}[H]
\[
\begin{array}{|c||c|c|c|c|c|c|c|c|c|c|c|c|c|}
\hline
b & 2 & 4 & 6 & 8 & 10 & 12 & 14 & 16 & 18 & 20 & 22 & 24 & 26 \\ \hline \hline
A_{0}\left( b\right) & \left\{ 5\right\} & \left\{ 7\right\} & \left\{ 9,11\right\} & \left\{ 11\right\} & \left\{ 13\right\} & \left\{ 15\right\} & \emptyset & \emptyset & \emptyset & \emptyset & \emptyset & \emptyset & \emptyset \\ \hline
a_{1}\left( b\right) & 7 & 9 & 13 & 13 & 15 & 17 & 17 & 19 & 21 & 23 & 25 & 27 & 28 \\ \hline
\end{array}
\]
\caption{Data when inequality (\ref{pos}) is satisfied.} \label{T2}
\end{table}
\end{corollary}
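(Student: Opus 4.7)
The plan is to reduce the entire corollary to a statement about the ratios $q(n)=p(n)/p(n-1)$ and then read off everything from Proposition \ref{exceptions}. The starting observation is that $P_{n}(1)=p(n)$, because specializing the generating function
$$\sum_{n\geq 0}P_{n}(z)\,q^{n}=\prod_{n\geq 1}(1-q^{n})^{-z}$$
at $z=1$ gives the Euler product $\prod_{n}(1-q^n)^{-1}=\sum p(n)q^{n}$. Therefore
$$\Delta_{a,b}(1) \;=\; p(a-1)\,p(b+1)\;-\;p(a)\,p(b),$$
and dividing through by the positive quantity $p(a-1)\,p(b)$ shows that $\Delta_{a,b}(1)\geq 0$ is exactly equivalent to the inequality $q(a)\leq q(b+1)$. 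From this point on the proof is purely an exercise in comparing entries of the $q$-sequence.

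Next, I would split into three regimes. First, if $b\geq 26$, then $b+1\geq 27$; the log-concavity of $p(n)$ for $n\geq 26$ due to Nicolas and DeSalvo--Pak translates into $q(n+1)\leq q(n)$ for every $n\geq 26$, so $q$ is non-increasing on $\{27,28,\dots\}$, giving $q(a)\leq q(b+1)$ for all $a>b+1$ at once. Second, for $b$ odd with $b\geq 27$ the same argument applies, while for $b$ odd with $1\leq b\leq 25$ the even index $b+1\in\{2,4,\dots,26\}$ occupies a position in the explicit chain of Proposition \ref{exceptions} which lies to the left of every $q(a)$ with $b+2\leq a\leq 27$; for $a\geq 28$ we use $q(a)\leq q(27)$ together with the chain entry $q(b+1)\geq q(27)$. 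Both subcases of the "$b$ odd" statement, and the full "$b\geq 26$" statement, follow.

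The remaining work, and the more delicate part, is the even-$b$ case with $2\leq b\leq 24$ (plus $b=26$). Here $b+1$ is odd and $q(b+1)$ typically sits at a position in the chain surrounded by larger $q$-values at even indices $b+2, b+4,\dots$, which is precisely why the inequality fails for some sporadic $a$'s. I would proceed by simply comparing $q(b+1)$ with each $q(a)$ for $a$ in the finite range $b+2\leq a\leq 27$, reading the outcome off the chain; the set of successful $a$'s in that range is $A_0(b)$, and the smallest $a$ from which the inequality holds for all larger indices in the range determines $a_1(b)$. Then for $a\geq 28$ we use $q(a)\leq q(27)$ together with the fact (also read from the chain) that $q(27)\leq q(b+1)$ since $b+1\leq 27$, so the inequality persists for all $a\geq a_1(b)$. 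The claims for each even $b\in\{2,4,\dots,26\}$ in Table \ref{T2} are obtained in this way as a finite verification.

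There is no deep obstacle beyond Proposition \ref{exceptions}: once the ratio reformulation is in place, the proof is bookkeeping along the chain plus the single analytic input of unconditional log-concavity for $n\geq 26$. The only step that requires care is the even-$b$ case analysis, where missing an entry of the chain would produce incorrect exceptional sets; the explicit Table \ref{firstcases} of numerical values of $q(n)$ serves as the sanity check.
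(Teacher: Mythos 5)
Your proposal is correct and follows essentially the same route as the paper: the paper's proof consists precisely of the equivalence $\Delta_{a,b}(1)\geq 0 \Longleftrightarrow q(a)\leq q(b+1)$ (via $P_n(1)=p(n)$) together with Proposition \ref{exceptions}, and your argument is this same reduction with the chain/log-concavity bookkeeping written out explicitly. The extra detail you supply (the three regimes and the finite verification producing $A_0(b)$ and $a_1(b)$) is exactly what the paper leaves implicit.
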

\begin{proof}
The proof follows from Proposition \ref{exceptions} and 
\begin{equation}
\Delta_{a,b}(1) \geq 0 \Longleftrightarrow q(a) \leq q(b+1).
\end{equation}
\end{proof}

\section{Proof of Theorem \ref{main}}
Let us first recall a formula \cite{Ko04} for the coefficients of $P_n(x)$. Let $P_n(x)
= \sum_{m=1}^n A_{n,m} \, x^m$. Then 
\begin{equation}
A_{n,m} = \frac{1}{m!}
\sum_{\substack {k_1, \ldots, k_m \in \mathbb{N} \\ k_1+ \ldots + k_m=n}} 
\, \prod_{i=1}^{m} 
\frac{\sigma (k_i)}{k_i}.
\label{eq:kostant}
\end{equation}
\subsection{Case $b=1$ and Theorem \ref{main} for $x_0=1$.}
We prove here that $\Delta_{a,1}(x)>0$ for all $a \geq 3$ and
$x >x_0=1$. 

\begin{proof}
Corollary \ref{b} implies $\Delta_{a,1}(x_0) \geq 0$ for all $a \geq 3$.
Note that $\Delta_{a,1}(x)$ has degree $a+1$ and has non-negative coefficients for $ 2 < a < 6$.
This implies that the theorem is already true for $x >0$.
We have $\Delta _{6,1}\left( x\right) >0$ for
$x\geq x_{0}$.
Let
$F_{a}\left( x\right) =P_{a-1}\left( x\right) \frac{x+3}{2} -P_{a}\left( x\right) $.
Then $\Delta _{a,1}\left( x\right) =xF_{a}\left( x\right) $.
Therefore to
show that $\Delta _{a,1}\left( x\right) > 0$ it is sufficient to show
that $F_{a}\left( x\right) >0$.

This we prove  by induction on $a\geq 3$ for $x >x_0$.
Note that
$F_{a}\left( x\right) >0$ for $x>x_{0}$ and
$3\leq a \leq 6$.
Therefore in the induction step we  assume $ a \geq 7$ and that
$F_{m}
\left( x\right) >0$ is true for all
$3 \leq m < a$ and $x > x_0$.
Now we will show $F_{a}^{\prime }\left( x\right) >0 $
for all $x > x_0$.
The derivative $F_{a}^{\prime }\left( x\right) $ is equal to
\begin{eqnarray*}
&&P_{a-1}'(x) \, \frac{x+3}{2} + P_{a-1}(x) \,  \frac{1}{2}
- P_{a}'(x) \\
&=&\sum_{k=1}^{a-1} \frac{\sigma(k)}{k} P_{a-1-k}(x) \, \frac{x+3}{2} - 
\sum_{k=1}^{a} \frac{\sigma(k)}{k} P_{a-k}(x)  + P_{a-1}(x) \,  \frac{1}{2}.
\end{eqnarray*}
This follows from \cite{HN18}:
\begin{equation} \label{derivative}
P_n'(x) = \sum_{k=1}^n \frac{\sigma(k)}{k} \, P_{n-k}(x).
\end{equation}
By the induction hypothesis we obtain
\begin{equation*}
\sum_{k=1}^{a-1} \frac{\sigma(k)}{k} P_{a-1-k}(x) \, \frac{x+3}{2} >
\frac{\sigma(a-1)}{a-1} \frac{x+3}{2} + 
\sum_{k=1}^{a-2} \frac{\sigma(k)}{k} P_{a-k}(x)
\end{equation*}
and
\begin{eqnarray*}
F_{a}^{\prime }\left( x\right)  & > &
\frac{\sigma(a-1)}{a-1} \frac{x+3}{2} - \sum_{k=a-1}^{a} \frac{\sigma(k)}{k} P_{a-k}(x)  + P_{a-1}(x) \,  \frac{1}{2}   \\
& = &
\frac{1}{2} \, P_{a-1}(x) - \frac{\sigma(a-1)}{a-1}  \frac{x}{2} +  \frac{3\sigma(a-1)}{2\, (a-1)} - 
\frac{  \sigma(a)}{a}.
\end{eqnarray*}
In the last step we utilize the property $ a  < \sigma(a) < a \left( 1 + \ln(a) \right)$ and obtain
\begin{equation*}
F_{a}^{\prime }\left( x\right)  > \frac{1}{2} \, P_{a-1}(x) - \frac{\sigma \left( a-1\right) x}{2\left( a-1\right) } 
+\frac{3}{2}-\left( 1 + \ln(a) \right) .
\end{equation*}
Since $P_{a-1}\left( 1\right) $ is the partition number of $a-1$ we have
$P_{a-1}\left( 1\right) \geq a-1$ for $x_0=1$.
The coefficients of the
polynomial $P_{a-1}\left( x\right) $ are provided by (\ref{eq:kostant})
and it implies that the coefficients of
$P_{a-1}\left( x\right) -\frac{\sigma \left( a-1\right) }{a-1}x$ are
not negative.
Therefore we obtain
$P_{a-1}\left( x\right) -\frac{\sigma \left( a-1\right) }{a-1}x\geq P_{a-1}\left( 1\right) -\frac{\sigma \left( a-1\right) }{a-1}$
for $x\geq 1$.
Finally
$F_{a}^{\prime }\left( x\right) >\frac{a-1}{2}-\frac{1+
\ln \left( a-1\right) }{2}+\frac{3}{2}-1-\ln \left( a\right) >0$
for $a\geq 7$.
\end{proof}

\begin{proof}[Proof of Corollary \ref{ableitungdeltapositiv} for the case $b=1$]
We have shown in the previous proof that $F_{a}\left( x\right) >0$ and
$F_{a}^{\prime }\left( x\right) >0$ for $x>x_{0}$ and $a\geq 7$. Therefore also
$\Delta _{a,1}^{\prime }\left( x\right) =xF_{a}^{\prime }\left( x\right) +F_{a}\left( x\right) >0$
for $x>x_{0}$. We also mentioned in the previous proof that the coefficients of
$\Delta _{a,1}\left( x\right) $ are not negative for $2<a<6$. For $a=6$ it can
be checked directly that $\Delta _{6,1}^{\prime }\left( x\right) >0$ for
$x\geq x_{0}$. This proves Corollary~\ref{ableitungdeltapositiv} for $b=1$.
\end{proof}

\subsection{Case $b=2$ and Theorem \ref{main} for $x_0=2$.}
\begin{proof}
Let $x_0=2$.
We have $P_{2}\left( x\right) =\left( x+3\right) \frac{x}{2}$
and
$P_{3}\left( x\right) =\left( x+8\right) \left( x+1\right) \frac{x}{6}$. Let
$F_{a}\left( x\right) =\frac{x+4}{3}P_{a-1}\left( x\right) -P_{a}\left( x\right) $.
Since
$\left( x+8\right) \left( x+1\right) 
 \geq \left( x+4\right) \left( x+3\right) $
for $x\geq 2$
we obtain
$$\Delta _{a,2}\left( x\right)
\geq \left( x+3\right) 
\frac{x}{2}\left( \frac{x+4}{3}P_{a-1}\left( x\right) -P_{a}\left( x\right) 
\right) =\left( x+3\right) \frac{x}{2}F_{a}\left( x\right). $$
For $x=x_{0}$ we have equality.

We will show $F_{a}\left( x\right) > 0$ by induction
on $a$. It holds for $a=4$ as
$F_{4}\left( x\right)
=\left( x+1\right) \left( x-1\right) \left( x-2\right) \frac{x}{72}>0$
for $x>2$. Similarly, we can show that $F_{a}\left( x\right) >0$ for
$x>2$ for $5\leq a\leq 13$. The
following
proposition shows that  $F_{a}^{\prime }\left( x\right) >0$ for $x>x_{0}$
if we assume $F_{m}\left( x\right) >0$ for $x>x_{0}$ for $4\leq m<a$.

The last step in the induction
is the following.
In Proposition \ref{prop} we showed that
$\Delta _{a,2}\left( x_{0}\right) \geq 0$ for $a\geq 4$. Additionally,
$F_{a}\left( x_{0}\right) =\frac{\Delta _{a,2}\left( x_{0}\right) }{\left( x_{0}+3\right) \frac{x_{0}}{2}}\geq 0$.
Using
$F_{a}^{\prime }\left( x\right) >0$ for $x>x_{0}$ we can conclude that
$\Delta _{a,2}\left( x\right) \geq \left( x+3\right) \frac{x}{2}F_{a}\left( x\right) >0$
for $x>x_{0}$.
\end{proof}

\begin{proposition}
Let $a\geq 14$ and assume that
$F_{m}\left( x\right) =\frac{x+4}{3}P_{m-1}\left( x\right) -P_{m}\left( x\right) >0$
for $x>x_{0}=2$
and $4\leq m<a$. Then $F_{a}^{\prime }\left( x\right) >0$.
\end{proposition}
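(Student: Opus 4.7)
The plan is to mirror the argument for the $b=1$ case: differentiate $F_a$, apply the derivative identity for $P_n'$, use the induction hypothesis to collapse the bulk of the resulting sum into a positive quantity, and then dominate the small residual boundary terms by the leading $\frac{1}{3}P_{a-1}(x)$.

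First I would compute
\begin{align*}
F_a'(x) &= \frac{1}{3} P_{a-1}(x) + \frac{x+4}{3} P_{a-1}'(x) - P_a'(x) \\
&= \frac{1}{3} P_{a-1}(x) + \sum_{k=1}^{a-1} \frac{\sigma(k)}{k}\, F_{a-k}(x) - \frac{\sigma(a)}{a},
\end{align*}
using the derivative identity $P_n'(x) = \sum_{k=1}^n \frac{\sigma(k)}{k} P_{n-k}(x)$ and isolating the tail term $\frac{\sigma(a)}{a} P_0(x) = \frac{\sigma(a)}{a}$. By the induction hypothesis, every summand with $1 \leq k \leq a-4$ satisfies $F_{a-k}(x) > 0$ (because $a-k \geq 4$), so these positive contributions may be discarded. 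For the three remaining boundary terms $F_1$, $F_2$, $F_3$, direct expansion gives
\begin{equation*}
F_1(x) = \frac{4-2x}{3}, \qquad F_2(x) = -\frac{x(x+1)}{6}, \qquad F_3(x) = \frac{x(2-x)}{3},
\end{equation*}
each non-positive for $x \geq 2$. Substituting yields the bound
\begin{equation*}
F_a'(x) > \frac{P_{a-1}(x)}{3} - \frac{\sigma(a-3)}{a-3}\cdot\frac{x(x-2)}{3} - \frac{\sigma(a-2)}{a-2}\cdot\frac{x(x+1)}{6} - \frac{\sigma(a-1)}{a-1}\cdot\frac{2(x-2)}{3} - \frac{\sigma(a)}{a}.
\end{equation*}

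To finish, I would show the right-hand side is positive for every $x \geq 2$ and $a \geq 14$. Applying $\sigma(n) < n(1+\ln n)$ replaces each $\sigma(m)/m$ by $1+\ln m$, so the subtracted quantity becomes a polynomial in $x$ of degree two with coefficients of size $O(\ln a)$. To dominate this uniformly in $x$, I would split $P_{a-1}(x) = A_{a-1,2}\,x^2 + R(x)$, where $R(x)$ has non-negative coefficients by the explicit formula for $A_{n,m}$ and therefore $R(x) \geq R(2) = p_{-2}(a-1) - 4A_{a-1,2}$ for $x \geq 2$. The leading piece $\frac{A_{a-1,2}}{3}x^2$ beats the subtracted $x^2$-contribution because $A_{a-1,2} \geq (a-2)/2$ grows linearly in $a$, while the constant and linear-in-$x$ pieces of the correction are absorbed by $R(2)/3$ using the super-polynomial growth of $p_{-2}(a-1)$ (already $p_{-2}(13) = 1770$ at the base case $a=14$).

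The main obstacle is pinning down the cross-over between the two regimes cleanly: near $x = 2$ the $F_1$ and $F_3$ contributions vanish and only the $\sigma(a-2)/(a-2) + \sigma(a)/a$ correction survives against $\frac{1}{3}P_{a-1}(2)$, whereas for $x$ bounded away from $2$ the $x^2$-coefficient comparison is binding. A unified argument follows from checking the case $a=14$ numerically at the two boundary expressions and then extending to $a > 14$ by monotonicity: both $A_{a-1,2}$ and $p_{-2}(a-1)$ grow in $a$ much faster than the logarithmic $\sigma$-bounds can compensate.
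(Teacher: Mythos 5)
Your computation of $F_a'$, the use of the derivative identity, the reduction via the induction hypothesis to the boundary terms $F_1,F_2,F_3$, and the resulting bound
\begin{equation*}
F_a'(x) > \frac{P_{a-1}(x)}{3} - \frac{\sigma(a-3)}{a-3}\cdot\frac{x(x-2)}{3} - \frac{\sigma(a-2)}{a-2}\cdot\frac{x(x+1)}{6} - \frac{\sigma(a-1)}{a-1}\cdot\frac{2(x-2)}{3} - \frac{\sigma(a)}{a}
\end{equation*}
are correct and coincide exactly with the paper's intermediate step. The gap is in how you finish. After the bound $\sigma(m)/m<1+\ln a$ the subtracted quantity is (up to the favorable constant) $\tfrac{1+\ln a}{6}(x+1)(3x-2)$, which contains a term $\tfrac{1+\ln a}{6}\,x$ growing linearly in $x$; your claim that this linear piece is ``absorbed by $R(2)/3$'' cannot be right as stated, since a fixed constant cannot dominate a linearly growing function on the unbounded range $x>2$. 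Your own closing paragraph concedes the crossover problem, but the proposed repair --- a numerical check at $a=14$ ``at the two boundary expressions'' plus monotonicity in $a$ --- is not a proof: the inequality must hold for \emph{all} $x>2$, so finitely many numerical evaluations settle nothing without a structural argument, and the monotonicity transfer (that $P_{a-1}(x)$ beats the $\ln a$ growth uniformly in $x$) is itself unproven.

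Two ways to close it. The paper's route keeps more of $P_{a-1}$ from Kostant's formula, namely $P_{a-1}(x)\geq x+\frac{a-2}{2}x^{2}+\sum_{m=3}^{a-1}A_{a-1,m}x^{m}$, and then rewrites the resulting quadratic lower bound for $F_a'(x)$ in powers of $(x-2)$; all three coefficients, $\frac{4a-16-12\ln a}{6}$, $\frac{4a-19-13\ln a}{6}$ and $\frac{a-5-3\ln a}{6}$, are positive for $a\geq 14$, so positivity for $x>2$ is immediate and no case split in $x$ is needed. Alternatively, your quadratic-only bound can be salvaged: with $\alpha:=\frac{A_{a-1,2}}{3}-\frac{1+\ln a}{2}\geq\frac{a-5-3\ln a}{6}>0$ and $\beta:=\frac{1+\ln a}{6}$, one checks that the vertex $\beta/(2\alpha)$ of the relevant quadratic lies to the left of $2$ for $a\geq14$, so positivity at $x=2$ (which your $R(2)$ estimate gives) does imply positivity for all $x>2$ --- but this vertex argument must be made explicitly; as written, your proof is incomplete at precisely this point.
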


\begin{proof}
For $F_{a}^{\prime }\left( x\right)$ we obtain
\begin{equation*}
\frac{1}{3}P_{a-1}\left( x\right) +\frac{x+4}{3}\sum _{k=1}^{a-1}\frac{\sigma \left( k\right) }{k}P_{a-1-k}\left( x\right) -\sum _{k=1}^{a}\frac{\sigma \left( k\right) }{k}P_{a-k}\left( x\right).
\end{equation*}
We apply the assumptions and obtain
\begin{eqnarray*}
F_{a}^{\prime }\left( x\right)
&>&\frac{1}{3}P_{a-1}\left( x\right) +\frac{x+4}{3}\sum _{k=a-3}^{a-1}\frac{\sigma \left( k\right) }{k}P_{a-1-k}\left( x\right) -\sum _{k=a-3}^{a}\frac{\sigma \left( k\right) }{k}P_{a-k}\left( x\right) \\
&= &\frac{1}{3}P_{a-1}\left( x\right) +\frac{x+4}{3}\left( \frac{\sigma \left( a-3\right) }{a-3}\left( x+3\right) \frac{x}{2}+\frac{\sigma \left( a-2\right) }{a-2}x+\frac{\sigma \left( a-1\right) }{a-1}\right) \\
&&{}-\frac{\sigma \left( a-3\right) }{a-3}\left( x+8\right) \left( x+1\right) \frac{x}{6}-\frac{\sigma \left( a-2\right) }{a-2}\left( x+3\right) \frac{x}{2}-\frac{\sigma \left( a-1\right) }{a-1}x-\frac{\sigma \left( a\right) }{a}\\
&=&\frac{1}{3}P_{a-1}\left( x\right) \\
&&{}-\frac{\sigma \left( a-3\right) }{a-3}\left( x-2\right) \frac{x}{3}-\frac{\sigma \left( a-2\right) }{a-2}\left( x+1\right) \frac{x}{6}-\frac{\sigma \left( a-1\right) }{a-1}\frac{2x-4}{3}-\frac{\sigma \left( a\right) }{a}\\
&\geq &\frac{1}{3}P_{a-1}\left( x\right)
-\frac{1+\ln \left( a\right) }{6}\left( x+1\right) \left( 3x-2\right) .\\
\end{eqnarray*}
We now use that
\begin{eqnarray*}
P_{a-1}\left( x\right)
&=&
\frac{\sigma \left( a-1\right) }{a-1}x+\sum _{k=1}^{a-2}\frac{\sigma \left( a-1-k\right) \sigma \left( k\right) }{2\left( a-1-k\right) k}x^2+\sum _{m=3}^{a-1}A_{a-1,m}x^{m}\\
&\geq &
x+\frac{a-2}{2}x^{2}+\sum _{m=3}^{a-1}A_{a-1,m}x^{m}
\end{eqnarray*}
for $x>0$. Therefore
\begin{eqnarray*}
F_{a}^{\prime }\left( x\right)
&>&\frac{x}{3}+\frac{a-2}{6}x^{2}+\frac{1}{3}\sum _{m=3}^{a-1}A_{a-1,m}x^{m}-\frac{1+\ln \left( a\right) }{6}\left( x+1\right) \left( 3x-2\right) \\
&=&\frac{1+\ln \left( a\right) }{3}+\frac{1-\ln \left( a\right) }{6}x+\frac{a-5-3\ln \left( a\right) }{6}x^{2}+\frac{1}{3}\sum _{m=3}^{a-1}A_{a-1,m}x^{m}\\
&=&\frac{4a-16-12\ln \left( a\right) }{6}+\frac{4a-19-13\ln \left( a\right) }{6}\left( x-2\right) \\
&&{}+\frac{a-5-3\ln \left( a\right) }{6}\left( x-2\right) ^{2}+\frac{1}{3}\sum _{m=3}^{a-1}A_{a-1,m}x^{m}>0
\end{eqnarray*}
for $a\geq 14$ and $x>x_{0}=2$.
\end{proof}

\begin{proof}[Proof of Corollary \ref{ableitungdeltapositiv} for the case $b=2$]
From the previous proof of Theorem \ref{main} for the case
$b=2$ we see that $F_{a}\left( x\right) >0$ for $x>x_{0}=2$. The previous
proposition showed that $F_{a}^{\prime }\left( x\right) >0$ for $a\geq 14$.
Therefore
$\Delta _{a,2}^{\prime }\left( x\right) =\left( x+\frac{3}{2}\right) F_{a}\left( x\right) +\left( x+3\right) \frac{x}{2}F_{a}^{\prime }\left( x\right) >0$
for $a\geq 14$. The remaining cases for $4\leq a\leq 13$ can be checked
directly.
\end{proof}

\subsection{Case $b=3$ and Theorem \ref{main} for $x_{0}=1$.}
\begin{proof}
We have
$P_{3}\left( x\right) =\frac{x}{6}\left( x+1\right) \left( x+8\right) $
and
$P_{4}\left( x\right) =\frac{x}{24}\left( x+1\right) \left( x+3\right) \left( x+14\right) $.
As
$\left( x+3\right) \left( x+14\right) 
\geq \frac{1}{3}\left( x+8\right) \left( 3x+17\right) $
for $x\geq 1$ we obtain
$$P_{4}\left( x\right) \geq \frac{x}{72}\left( x+1\right) \left( x+8\right) \left( 3x+17\right). $$
Let
$F_{a}\left( x\right) =\frac{3x+17}{12}P_{a-1}\left( x\right) -P_{a}\left( x\right) $.
Then
\begin{equation}
\Delta _{a,3}\left( x\right) =P_{a-1}\left( x\right) P_{4}\left( x\right) -
P_{a}\left( x\right) P_{3}\left( x\right) \geq \frac{x}{6}\left( x+1\right) \left( x+8\right) F_{a}\left( x\right)
\label{eq:deltaa3}
\end{equation}
for $x\geq 1$. Note that for $x=x_{0}=1$ we have equality.
We also have
$F_{a}\left( x\right) >0$ for $x>1$ and $5\leq a\leq 14$.

The proof will be by induction on $a$. The following proposition shows that
$F_{a}^{\prime }\left( x\right) >0$ for $x>x_{0}$ and $a\geq 15$, if we assume
that $F_{m}\left( x\right) >0$ for $x>x_{0}$ and $5\leq m<a$.

By Corollary \ref{b} $\Delta _{a,3}\left( x_{0}\right) \geq 0$ for $a\geq 5$.
Additionally,
$F_{a}\left( x_{0}\right) =\frac{\Delta _{a,3}\left( x_{0}\right) }{\frac{x_{0}}{6}\left( x_{0}+1\right) \left( x_{0}+8\right) }\geq 0$.
Using $F_{a}^{\prime }\left( x\right) >0$ for $x>x_{0}$ and (\ref{eq:deltaa3})
we can conclude that
$\Delta _{a,3}\left( x\right) \geq \frac{x}{6}\left( x+1\right) \left( x+8\right) F_{a}\left( x\right) >0$.
\end{proof}

\begin{proposition}
Let $a\geq 15$. If
$F_{m}\left( x\right) =\frac{3x+17}{12}P_{m-1}\left( x\right) -P_{m}\left( x\right) >0$
for $x>x_{0}=1$ for all
$5\leq m<a$ then $F_{a}^{\prime }\left( x\right) >0$.
\end{proposition}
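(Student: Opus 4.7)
The plan is to mirror the argument used for $b=2$: differentiate $F_a$, apply the induction hypothesis to collapse the bulk of the sum, and control the residual boundary contributions using the explicit lower bound on $P_{a-1}(x)$ coming from (\ref{eq:kostant}).

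First I would use (\ref{derivative}) to write
\[
F_a'(x)=\tfrac{1}{4}P_{a-1}(x)+\tfrac{3x+17}{12}\sum_{k=1}^{a-1}\tfrac{\sigma(k)}{k}P_{a-1-k}(x)-\sum_{k=1}^{a}\tfrac{\sigma(k)}{k}P_{a-k}(x).
\]
The induction hypothesis $\tfrac{3x+17}{12}P_{m-1}(x)>P_m(x)$, valid for $5\leq m<a$, applied term-by-term with $m=a-k$ eliminates the contributions with $1\leq k\leq a-5$ and leaves only the four boundary indices, giving
\[
F_a'(x)>\tfrac{1}{4}P_{a-1}(x)+\sum_{j=1}^{4}\tfrac{\sigma(a-j)}{a-j}\Bigl[\tfrac{3x+17}{12}P_{j-1}(x)-P_j(x)\Bigr]-\tfrac{\sigma(a)}{a}.
\]

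Next I would compute the four bracketed polynomials explicitly from $P_0,\ldots,P_4$; for instance $\tfrac{3x+17}{12}P_0(x)-P_1(x)=\tfrac{17-9x}{12}$ and $\tfrac{3x+17}{12}P_1(x)-P_2(x)=-\tfrac{x(3x+1)}{12}$, with the remaining two being explicit cubics. Using $\sigma(n)<n(1+\ln n)$ to replace every $\tfrac{\sigma(a-j)}{a-j}$ by the uniform upper bound $1+\ln a$ produces an inequality of the form $F_a'(x)>\tfrac{1}{4}P_{a-1}(x)-(1+\ln a)\,R(x)$, where $R(x)$ is an explicit polynomial of degree $3$ independent of $a$.

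Finally, invoking (\ref{eq:kostant}) in the same form used for $b=2$,
\[
P_{a-1}(x)\geq x+\tfrac{a-2}{2}x^{2}+\sum_{m=3}^{a-1}A_{a-1,m}x^{m}\qquad(x>0),
\]
with non-negative higher coefficients, and noting that the quadratic coefficient grows linearly in $a$ while $1+\ln a$ grows only logarithmically, the claim reduces after expansion about $x_0=1$ to checking strict positivity of the constant, linear, and quadratic coefficients of $\tfrac{1}{4}P_{a-1}(x)-(1+\ln a)R(x)$ in powers of $(x-1)$ when $a\geq 15$. The main obstacle will be the quantitative bookkeeping at $x_0=1$: the base point is closer to the problematic region than in the $b=2$ case, where $x_0=2$ provides more slack, so $R(x)$ is tighter against $\tfrac{1}{4}P_{a-1}(x)$ and the threshold $a\geq 15$ is only just large enough to force each low-order $(x-1)$-coefficient to be positive. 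Careful grouping of the $(x-1)$-expansion, exactly as in the $b=2$ step, should then deliver the required positivity.
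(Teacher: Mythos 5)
Your skeleton is the same as the paper's (differentiate via (\ref{derivative}), use the induction hypothesis to discard the terms with $1\le k\le a-5$, keep the four boundary brackets plus $-\frac{\sigma(a)}{a}$, then beat an $a$-independent cubic times $1+\ln(a)$ with a Kostant lower bound for $P_{a-1}$), but two of your steps fail as stated. First, you cannot simply ``replace every $\frac{\sigma(a-j)}{a-j}$ by the uniform upper bound $1+\ln(a)$'': an upper bound on $\frac{\sigma(a-j)}{a-j}$ only preserves a lower bound on the term $\frac{\sigma(a-j)}{a-j}\left(\frac{3x+17}{12}P_{j-1}(x)-P_{j}(x)\right)$ when the bracket is nonpositive, and two of your brackets are positive near $x_{0}=1$: $\frac{3x+17}{12}P_{0}(x)-P_{1}(x)=\frac{17-9x}{12}>0$ for $1<x<\frac{17}{9}$, and $\frac{3x+17}{12}P_{2}(x)-P_{3}(x)=-\frac{x}{24}\left(x^{2}+10x-19\right)>0$ for $1<x<2\sqrt{11}-5$. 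You must first weaken these brackets to quantities that are nonpositive on $(1,\infty)$ (the paper uses $9x-17\le 9x-9$ and $x^{2}+10x-19\le (x+11)(x-1)$); only then is the uniform $1+\ln(a)$ bound legitimate, and the resulting majorant is $R(x)=\frac{1}{72}\left(13x^{3}+48x^{2}+17x+18\right)$.

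Second, and more seriously, the final reduction is quantitatively insufficient for the claimed threshold $a\ge 15$. Since $R$ is cubic, the $b=2$-form Kostant bound $P_{a-1}(x)\ge x+\frac{a-2}{2}x^{2}$ (plus unquantified nonnegative higher terms) cannot work: the comparison polynomial $\frac{x}{4}+\frac{a-2}{8}x^{2}-(1+\ln(a))R(x)$ is already negative at $x=1$ when $a=15$ (about $\frac{1}{4}+\frac{13}{8}-\frac{4}{3}(1+\ln 15)\approx -3$) and tends to $-\infty$, so checking only the constant, linear, and quadratic $(x-1)$-coefficients can never close the argument; linear-in-$a$ growth of the quadratic coefficient does not beat $1+\ln(a)$ at $a=15$, only at roughly $a\gtrsim 57$, which would leave $15\le a\le 56$ uncovered (and only $5\le a\le 14$ is verified directly). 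The missing ingredient is the cubic Kostant estimate from (\ref{eq:kostant}), $A_{a-1,3}\ge\frac{1}{6}\binom{a-2}{2}$, which the paper extracts via the triple sum; it contributes $\frac{(a-2)(a-3)}{48}$ to the constant $(x-1)$-coefficient, yielding $\frac{a^{2}+a-58-64\ln(a)}{48}$, positive exactly from $a=15$ on, and it is also needed to make the cubic $(x-1)$-coefficient (which you omit from your checklist) nonnegative. With these two corrections your outline becomes the paper's proof.
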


\begin{proof} The derivative $F_{a}^{\prime }\left( x\right)$ is equal to
\begin{equation*}
\frac{1}{4}P_{a-1}\left( x\right) +\frac{3x+17}{12}\sum _{k=1}^{a-1}\frac{\sigma \left( k\right) }{k}P_{a-1-k}\left( x\right) -\sum _{k=1}^{a}\frac{\sigma \left( k\right) }{k}P_{a-k}\left( x\right).
\end{equation*}
Applying the assumptions leads to 
\begin{eqnarray*} 
&&F_{a}^{\prime }\left( x\right) \\
&>&\frac{1}{4}P_{a-1}\left( x\right) +\frac{3x+17}{12}\sum _{k=a-4}^{a-1}\frac{\sigma \left( k\right) }{k}P_{a-1-k}\left( x\right) -\sum _{k=a-4}^{a}\frac{\sigma \left( k\right) }{k}P_{a-k}\left( x\right) \\
&=&\frac{1}{4}P_{a-1}\left( x\right) +\frac{3x+17}{12}
( \frac{\sigma \left( a-4\right) }{a-4}P_{3}\left( x\right) +\frac{\sigma \left( a-3\right) }{a-3}P_{2}\left( x\right) +\frac{\sigma \left( a-2\right) }{a-2}x\\
&&{}+\frac{\sigma \left( a-1\right) }{a-1}
) -\frac{\sigma \left( a-4\right) }{a-4}P_{4}\left( x\right) -\frac{\sigma \left( a-3\right) }{a-3}P_{3}\left( x\right) -\frac{\sigma \left( a-2\right) }{a-2}P_{2}\left( x\right) \\
&&{}-\frac{\sigma \left( a-1\right) }{a-1}x-\frac{\sigma \left( a\right) }{a} \\
&=&\frac{1}{4}P_{a-1}\left( x\right) -\frac{\sigma \left( a-4\right) }{a-4}\frac{5x}{36}\left( x+1\right) \left( x-1\right) -\frac{\sigma \left( a-3\right) }{a-3}\frac{x}{24}\left( x^{2}+10x-19\right) \\
&&{}-\frac{\sigma \left( a-2\right) }{a-2}\frac{x}{12}\left( 3x+1\right) -\frac{\sigma \left( a-1\right) }{a-1}\frac{9x-17}{12}-\frac{\sigma \left( a\right) }{a}.
\end{eqnarray*}
Now, $
x^{2}+10x-19\leq x^{2}+10x-11=\left( x+11\right) \left( x-1\right)
$
and $9x-17\leq 9x-9$. As $-x<0$ we obtain
\begin{eqnarray*}
&&F_{a}^{\prime }\left( x\right) \\
&>&\frac{1}{4}P_{a-1}\left( x\right) -\frac{\sigma \left( a-4\right) }{a-4}\frac{5x}{36}\left( x+1\right) \left( x-1\right) -\frac{\sigma \left( a-3\right) }{a-3}\frac{x}{24}\left( x-1\right) \left( x+11\right) \\
&&{}-\frac{\sigma \left( a-2\right) }{a-2}\frac{x}{12}\left( 3x+1\right) -\frac{\sigma \left( a-1\right) }{a-1}\frac{3x-3}{4}-\frac{\sigma \left( a\right) }{a}\\
&\geq &\frac{1}{4}P_{a-1}\left( x\right) -\frac{1+\ln \left( a\right) }{72}\left( 13x^{3}+48x^{2}+17x+18\right) .
\end{eqnarray*}
Now
\begin{eqnarray*}
&&P_{a-1}\left( x\right) \\
&=&\frac{\sigma \left( a-1\right) }{a-1}x+\sum _{k=1}^{a-2}\frac{\sigma \left( a-1-k\right) \sigma \left( k\right) }{2\left( a-1-k\right) k}x^{2}\\
&&{}+\sum _{j=1}^{a-3}\sum _{k=1}^{a-j-2}\frac{\sigma \left( a-1-j-k\right) \sigma \left( k\right) \sigma \left( j\right) }{6\left( a-1-j-k\right) jk}x^{3}+\sum _{m=4}^{a-1}A_{a-1,m}x^{m} \\
&\geq &
x+\frac{a-2}{2}x^{2}+\sum _{j=1}^{a-3}\frac{a-j-2}{6}x^{3}+\sum _{m=4}^{a-1}A_{a-1,m}x^{m} \\
&=&x+\frac{a-2}{2}x^{2}+\binom{a-2}{2}\frac{x^{3}}{6}+\sum _{m=4}^{a-1}A_{a-1,m}x^{m}.
\end{eqnarray*}
Then
\begin{eqnarray*}
F_{a}^{\prime }\left( x\right)
&>&\frac{1}{4}x+\frac{a-2}{8}x^{2}+\binom{a-2}{2}\frac{x^{3}}{24}+\frac{1}{4}\sum _{m=4}^{a-1}A_{a-1,m}x^{m}\\
&&{}-\frac{1+\ln \left( a\right) }{72}\left( 13x^{3}+48x^{2}+17x+18\right) \\
&=&-\frac{1+\ln \left( a\right) }{4}+\frac{1-17\ln \left( a\right) }{72}x+\frac{3a-22-16\ln \left( a\right) }{24}x^{2}\\
&&{}+\frac{3\left( a-2\right) \left( a-3\right) -26-26\ln \left( a\right) }{144}x^{3}+\frac{1}{4}\sum _{m=4}^{a-1}A_{a-1,m}x^{m}\\
&=&\frac{a^{2}+a-58-64\ln \left( a\right) }{48}+\frac{9a^{2}-9a-286-304\ln \left( a\right) }{144}\left( x-1\right) \\
&&{}+\frac{3a^{2}-9a-52-58\ln \left( a\right) }{48}\left( x-1\right) ^{2}\\
&&{}+\frac{3\left( a-2\right) \left( a-3\right) -26-26\ln \left( a\right) }{144}\left( x-1\right) ^{3}+\frac{1}{4}\sum _{m=4}^{a-1}A_{a-1,m}x^{m}
>0
\end{eqnarray*}
for $a\geq 15$ and $x>x_{0}=1$.
\end{proof}

\begin{proof}[Proof of Corollary \ref{ableitungdeltapositiv} for the case $b=3$]
From the proof of Theorem \ref{main} for the case $b=3$ we observe that
$F_{a}\left( x\right) >0$ for $x>x_{0}=1$. The previous proposition shows that
$F_{a}^{\prime }\left( x\right) >0$ for $a\geq 15$ and $x>x_{0}$. Therefore
$\Delta _{a,3}^{\prime }\left( x\right) =\frac{1}{6}\left( 3x^{2}+18x+8\right) F_{a}^{\prime }\left( x\right) +\frac{x}{6}\left( x+1\right) \left( x+8\right) F_{a}^{\prime }\left( x\right) >0$.
For the remaining cases $5\leq a\leq 14$ it can be checked directly that
$\Delta _{a,3}^{\prime }\left( x\right) >0$.
\end{proof}


\section{Conjecture 2: approach for general $b$.}
We offer a general approach to Conjecture 2, based on four assumptions.
Let $x_0>0$ and $a > b+1$. We define
\begin{eqnarray}
H_{b}(x) & := &
\frac{P_{b+1}(x)}{P_{b}(x)} - \frac{x}{b+1},\\
G_{b}(x) & := & \frac{x}{b+1}+H_{b}\left( x_{0}\right) =\frac{x-x_{0}}{b+1}+
\frac{P_{b+1}\left( x_{0}\right) }{P_{b}\left( x_{0}\right) }, \\
F_{a,b}(x) & := & G_{b}\left( x\right) P_{a-1}\left( x\right) -P_{a}\left( x\right) 
.\label{eq:fab}
\end{eqnarray}
\subsection{\label{four}Four Assumptions}

In this subsection let $a>b+1$ and $x_{0}>0$ be fixed.

\begin{assumption}
\label{annahme0}$\Delta _{a,b}\left( x_{0}\right) \geq 0$.
\end{assumption}

\begin{assumption} $H_b(x) \geq H_b(x_0)$
\label{annahme1}
for all $x\geq x_{0}$.
\end{assumption}

\begin{assumption} For all $x>x_{0}$
and $a-1-b\leq k\leq a-1$ let
\label{annahme2}
\begin{equation}
G_b(x)
P_{a-1-k}\left( x\right) -P_{a-k}\left( x\right) \leq 0.
\end{equation}
\end{assumption}

\begin{assumption}[Induction hypothesis]
\label{eq:induktionsannahme}$F_{m,b}\left( x\right) >0$
for $x>x_{0}$ and $b+2\leq m<a$.
\end{assumption}

\begin{remarks}\hfill
\begin{enumerate}
\item  Let $H_b(x)$ be 
monotonically increasing for $x\geq x_{0}$, then Assumption~\ref{annahme1} is valid.
\item  \label{deltaab0}Assumption~\ref{annahme1} implies
\begin{equation}
\Delta _{a,b}\left( x\right) \geq P_{b}\left( x\right) F_{a,b}\left( x\right) .
\label{eq:deltaab}
\end{equation}
For $x=x_0$ we have equality.
\end{enumerate}
\end{remarks}
The idea is to generalize the induction step approach on $a>b+1$ from the
previous section
to arbitrary $b$
and show as the main intermediate step
\begin{equation}
F_{a,b}^{\prime }\left( x\right) \geq 0.
\label{eq:induktionsschritt}
\end{equation}
Then from part \ref{deltaab0} of the previous remarks we obtain
$F_{a,b}\left( x_{0}\right) =\frac{\Delta _{a,b}\left( x_{0}\right) }{P_{b}\left( x_{0}\right) }$.
Assumption \ref{annahme0} implies
$F_{a,b}\left( x\right) \geq 0$ and together
with (\ref{eq:deltaab}) we obtain also
$\Delta _{a,b}\left( x\right) \geq 0$ for $x\geq x_{0}$.

Using the assumptions is not sufficient to complete the induction
step. The last estimate on $F_{a,b}\left( x\right) $ can in general
yet only be bounded
asymptotically for large $a$, see next subsection.

For now we are going to explain how we can use the assumptions from the
beginning of this subsection for a lower bound on $F_{a,b}\left( x\right) $.
If we derive (\ref{eq:fab}) we obtain
\begin{eqnarray*} F_{a,b}^{\prime }\left( x\right)
&=&\frac{1}{b+1}P_{a-1}\left( x\right) +
G_b(x)
P_{a-1}^{\prime }\left( x\right) -P_{a}^{\prime }\left( x\right) \\
&=&\frac{1}{b+1}P_{a-1}\left( x\right) +
G_b(x) 
\sum _{k=1}^{a-1}\frac{\sigma \left( k\right) }{k}P_{a-1-k}\left( x\right) -\sum _{k=1}^{a}\frac{\sigma \left( k\right) }{k}P_{a-k}\left( x\right).
\end{eqnarray*}
Using now Assumption \ref{eq:induktionsannahme} we obtain
\begin{eqnarray*}
F_{a,b}^{\prime }\left( x\right) 
&>&\frac{1}{b+1}P_{a-1}\left( x\right) +
G_b(x)
\sum _{k=a-1-b}^{a-1}\frac{\sigma \left( k\right) }{k}P_{a-1-k}\left( x\right)
{}-\sum _{k=a-1-b}^{a}\frac{\sigma \left( k\right) }{k}P_{a-k}\left( x\right) \\
&=&\frac{1}{b+1}P_{a-1}\left( x\right) +\sum _{k=a-1-b}^{a-1}\frac{\sigma \left( k\right) }{k}
\left( 
G_b(x)
P_{a-1-k}\left( x\right) -P_{a-k}\left( x\right) \right) -\frac{\sigma \left( a\right) }{a}
\end{eqnarray*}
and with Assumption \ref{annahme2} we can continue
\begin{equation}
\begin{array}{rcl}
F_{a,b}^{\prime }\left( x\right) 
&>&\displaystyle
\frac{1}{b+1}P_{a-1}\left( x\right) \\
&&\displaystyle {}
-\left( 1+\ln \left( a\right) \right) \left( 1+\sum _{k=a-1-b}^{a-1}\left( P_{a-k}\left( x\right) -
G_b(x)
P_{a-1-k}\left( x\right) \right) \right) .
\end{array}
\label{eq:vorlaguerre}
\end{equation}

\subsection{\label{laguerre}An estimate using associated Laguerre polynomials}

Here we will explain an idea how to show the positivity of
right hand side of (\ref{eq:vorlaguerre}).
We want to bound the coefficients of $P_{a-1}\left( x\right) $ from below
in such a way that they
dominate the coefficients of the subtracted polynomial. Unfortunately also this
approach here in the end
only works asymptotically and only for most coefficients.

Let
$L_{n}^{\left( 1\right) }\left( x\right) =\sum _{k=0}^{n}\binom{n+1}{n-k}\frac{\left( -x\right) ^{k}}{k!}$
be the
associated Laguerre polynomial of degree $n$ with parameter
$\alpha =1$. Then
$P_{n}\left( x\right) \geq \frac{x}{n}L_{n-1}^{\left( 1\right) }\left( -x\right) =\sum _{k=1}^{n}\binom{n-1}{k-1}\frac{x^{k}}{k!}$
for $x>0$. This follows from \cite{HN19A} or directly from Kostant's formula
(\ref{eq:kostant}) as it implies
$A_{n,m}\geq \frac{1}{m!}\sum _{\substack{k_{1},\ldots ,k_{m}\in \mathbb{N} \\ k_{1}+\ldots +k_{m}=n}}1=\frac{1}{m!}\binom{n-1}{m-1}$.
From the last step in the previous subsection
we can now continue
\begin{eqnarray*}F_{a,b}^{\prime }\left( x\right)
&>&\frac{1}{b+1}\sum _{k=1}^{a-1}\binom{a-2}{k-1}\frac{x^{k}}{k!}\\
&&{}-\left( 1+\ln \left( a\right) \right) \left( 1+\sum _{k=a-1-b}^{a-1}\left( P_{a-k}\left( x\right) -
G_b(x)
P_{a-1-k}\left( x\right) \right) \right) .
\end{eqnarray*}
This is positive
if we can bound the coefficients of the subtracted polynomial with the
$1+\ln \left( a\right) $ term by the coefficients
$\frac{1}{b+1}\binom{a-2}{k-1}$. This is always possible for $2\leq k\leq a-2$ for large
$a\geq a_{0}$.

Then for example for $x_{0}=2$ we could deduce
from \cite{BKRT20}
that $\Delta _{a,b}\left( x_{0}\right) \geq 0$. As explained
shortly before then we
also have
$F_{a,b}\left( x_{0}\right) =\frac{\Delta _{a,b}\left( x_{0}\right) }{P_{b}\left( x_{0}\right) }\geq 0$.
Therefore, $F_{a,b}\left( x\right) >0$ for $x>x_{0}$.
Then (\ref{eq:deltaab})
implies
$\Delta _{a,b}\left( x\right) \geq P_{b}\left( x\right) F_{a,b}\left( x\right) >0$
for $x>x_{0}$.

\subsection{Proof of Assumptions \ref{annahme1} and \ref{annahme2} 
for $b\in \left\{ 0,1,2,3,4,5,6\right\} $}

Our approach
to prove Assumption \ref{annahme2} requires us to compare in
particular the values for the initial points $x_{0}$ for $b$
with all values for $k<b$. As it turns out again the case
$b=5$ then forces us to choose $x_{0}\geq 2.0554$ in this
approach which then propagates to the case $b>5$.
For values of $b<5$ we could also have chosen $x_{0}=2$
for example, compare Table \ref{kleinste}.

Having proven Assumptions \ref{annahme1} und \ref{annahme2} for the
cases $b\in \left\{ 4,5,6\right\} $ carries out the
induction step up to inequality (\ref{eq:vorlaguerre}).
What is left to do is to prove that the right hand side
of (\ref{eq:vorlaguerre}) is really
positive (and to check that $\Delta _{a,b}\left( x_{0}\right) \geq 0 $
for $x_{0}=2.0554$). The positivity can probably be shown using the method
proposed in the last subsection.
So the analysis of bounding the coefficients of the subtracted
polynomial with the $1+\ln \left( a\right) $ (see the end of the
previous subsection) has to be carried out in
the cases $b\in \left\{ 4,5,6\right\} $.

\begin{proposition}
For $b\in \left\{ 0,1,2,3,4,5,6\right\} $ the functions
$x\mapsto \frac{P_{b+1}\left( x\right) }{P_{b}\left( x\right) }-\frac{x}{b+1}$
are monotonically increasing for $x\geq x_{0}\geq 0.776$
which implies
Assumption \ref{annahme1}.
\end{proposition}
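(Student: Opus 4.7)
The plan is to reduce monotonicity of $H_b$ on $[0.776,\infty)$ for each $b\in\{0,1,\ldots,6\}$ to the non-negativity of an explicit polynomial. Writing
\begin{equation*}
H_b'(x) = \frac{(b+1)\bigl(P_{b+1}'(x)P_b(x)-P_{b+1}(x)P_b'(x)\bigr) - P_b(x)^2}{(b+1)\,P_b(x)^2},
\end{equation*}
and using that $P_b(x)>0$ for $x>0$, it suffices to show
\begin{equation*}
N_b(x) := (b+1)\bigl(P_{b+1}'(x)P_b(x)-P_{b+1}(x)P_b'(x)\bigr) - P_b(x)^2 \geq 0
\quad\text{for } x\geq 0.776.
\end{equation*}

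The cases $b=0,1$ are immediate: $H_0(x)=x-x=0$ and $H_1(x)=(x+3)/2-x/2=3/2$ are both constant. For $b=2,3$ the factorizations of $P_b$ and $P_{b+1}$ reduce $H_b$ to a simple rational function, and a direct computation yields
\begin{equation*}
H_2(x) = \frac{6x+8}{3(x+3)}, \qquad H_3(x) = \frac{9x+42}{4(x+8)},
\end{equation*}
whose derivatives $10/[3(x+3)^2]$ and $30/[4(x+8)^2]$ are strictly positive on $[0.776,\infty)$. None of these four subcases requires the threshold $0.776$.

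The real work is in $b\in\{4,5,6\}$. First I would compute $P_5,P_6,P_7$ via the recurrence $nP_n(x) = x\sum_{k=1}^n \sigma(k)P_{n-k}(x)$ recalled at the start of Section~3, and then expand $N_b(x)$ as a polynomial in $x$. After cancelling the common $x$-factor carried by $P_b$ and $P_b'$ one is left with a polynomial of moderate degree whose non-negativity on $[0.776,\infty)$ I would establish by one of two routes: either (a) substitute $x=y+0.776$ and verify that $N_b(y+0.776)$ has only non-negative coefficients in $y$, which is the strongest form of the bound; or, failing that, (b) perform a Sturm-sequence computation isolating the real roots of $N_b$ and checking that all of them lie strictly below $0.776$. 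The numerical value $0.776$ is plainly calibrated to be just above the largest positive real root arising among $N_4,N_5,N_6$.

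The principal obstacle will be the degree-$12$ polynomial $N_6$, for which a closed-form factorization is implausible and a rigorous root-separation argument is called for; this is the step most naturally delegated to the PARI/GP or Julia computations mentioned in Section~1. Once non-negativity of $N_4,N_5,N_6$ on $[0.776,\infty)$ is verified, integrating $H_b'\geq 0$ from $x_0$ to $x$ gives $H_b(x)\geq H_b(x_0)$, which is precisely Assumption~\ref{annahme1}.
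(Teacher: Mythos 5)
Your proposal is correct and follows essentially the same route as the paper: reduce monotonicity of $H_b$ to non-negativity of $N_b(x)-\frac{1}{b+1}P_b(x)^2$ (your $N_b$ is just $(b+1)$ times this) and verify the resulting explicit polynomials for each $b\leq 6$, which the paper does by tabulating them. One small factual point: once expanded, the $b=6$ polynomial has only non-negative coefficients and the $b=4$ one factors as $\frac{1}{48}(x^2+4x+16)(x+3)^2x^2$, so the sole case with a positive real root — and hence the only reason for the threshold $0.776$ — is $b=5$, not $b=6$.
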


\begin{remark}
Actually the proof will show that the functions are monotonically increasing for
all $x> 0$
for $b\in \left\{ 0,1,2,3,4,6\right\} $
with the exception of $b=5$ where we need the restriction
on $x_{0}$.
\end{remark}

\begin{proof}
The derivative is
\begin{equation}
\frac{P_{b+1}^{\prime }\left( x \right) P_{b}\left( x\right) -P_{b+1}\left( x\right) P_{b}^{\prime }\left( x\right) }{\left( P_{b}\left( x\right) \right) ^{2}}-\frac{1}{b+1}.
\label{eq:ableitung}
\end{equation}
Let
\begin{equation}
N_{b}\left( x\right) =P_{b+1}^{\prime }\left( x\right) P_{b}\left( x\right) -P_{b+1}\left( x\right) P_{b}^{\prime }\left( x\right)
.
\label{eq:nenner}
\end{equation}
Then (\ref{eq:ableitung}) is not negative if and only if
$N_{b}\left( x\right) -\frac{1}{b+1}\left( P_{b}\left( x \right) \right) ^{2}\geq 0$.
Now
\begin{table}[H]
\[
\begin{array}{|r|l|}
\hline
b & N_{b}\left( x\right) -\frac{1}{b+1}\left( P_{b}\left( x\right) \right) ^{2} \\ \hline \hline
0 & 0
 \\ \hline
1 & 0
 \\ \hline
2 & \frac{5}{6}\*x^2
 \\ \hline
3 & \frac{5}{24}\*\left( x+1\right) ^{2}\*x^2
 \\ \hline
4 & \frac{1}{48}\*\left( x^{2}+4x+16\right) \*\left( x+3\right) ^{2}\*x^{2}
 \\ \hline
5 & \frac{1}{4320}\*\left( 5\*x^{6}
 + 120\*x^{5}
 + 1250\*x^{4}
 + 6144\*x^{3}
 + 11705\*x^{2}
 - 1800\*x
 - 9000\right) \*x^{2}
 \\ \hline
6 & \frac{1}{120960}\*( 5\*x^8
 + 220\*x^7
 + 4090\*x^6
 + 38416\*x^5
 + 192565\*x^4
 + 536500\*x^3
 + 1049420\*x^2 \\
& {}+ 1440000\*x
 + 763008) \*x^{2}
 \\ \hline
\end{array}
\]
\caption{Polynomials $N_{b}\left( x\right) -\frac{1}{b+1}\left( P_{b}\left( x\right) \right) ^{2}$ for $b\in \left\{ 0,1,2,3,4,5,6\right\} $.}
\end{table}
which are all not negative for $x\geq x_{0}$.
\end{proof}

\begin{proposition}
Let $b\in \left\{ 1,2,3,4,5,6\right\} $ then
$x\mapsto \frac{P_{k+1}\left( x\right) }{P_{k}\left( x\right) }-\frac{x}{b+1}$
is monotonically increasing for $x>x_{0}=2.0554$ and $0\leq k\leq b$ and
\begin{equation}
\frac{P_{b+1}\left( x_{0}\right) }{P_{b}\left( x_{0}\right) }-\frac{x_{0}}{b+1}\leq \frac{P_{k+1}\left( x_{0}\right) }{P_{k}\left( x_{0}\right) }-\frac{x_{0}}{b+1}
\label{eq:anfang2}
\end{equation}
for $0\leq k\leq b$. This implies Assumption \ref{annahme2}
for $x_{0}=2.0554$.
\end{proposition}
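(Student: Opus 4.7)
The plan is to first reformulate Assumption \ref{annahme2} into the two ingredients claimed by the proposition, and then recover the assumption by chaining them. Substituting $j:=a-1-k$, the inequality $G_{b}(x)P_{a-1-k}(x)-P_{a-k}(x)\leq 0$ for $a-1-b\leq k\leq a-1$ becomes $G_{b}(x)\leq \frac{P_{j+1}(x)}{P_{j}(x)}$ for $0\leq j\leq b$ and $x>x_{0}$. Since $G_{b}(x)=\frac{x}{b+1}+\frac{P_{b+1}(x_{0})}{P_{b}(x_{0})}-\frac{x_{0}}{b+1}$, this is equivalent to
\[
\frac{P_{b+1}(x_{0})}{P_{b}(x_{0})}-\frac{x_{0}}{b+1}\leq \frac{P_{j+1}(x)}{P_{j}(x)}-\frac{x}{b+1}\qquad (0\leq j\leq b,\ x>x_{0}),
\]
which I would obtain by first pushing $x$ down to $x_{0}$ using the monotonicity assertion of the proposition, and then passing from the index $j$ to the index $b$ using the $x_{0}$-inequality (\ref{eq:anfang2}).

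For the monotonicity in $x$ I would exploit the preceding proposition via the decomposition
\[
\frac{P_{k+1}(x)}{P_{k}(x)}-\frac{x}{b+1}=\left(\frac{P_{k+1}(x)}{P_{k}(x)}-\frac{x}{k+1}\right)+\frac{b-k}{(k+1)(b+1)}\,x.
\]
Applying the preceding proposition with $b$ replaced by each $k\in\{0,1,\ldots,6\}$ in turn shows that the parenthesized expression is monotonically increasing for $x\geq 0.776$. Since the coefficient $\frac{b-k}{(k+1)(b+1)}$ is nonnegative whenever $k\leq b$, the added linear term is also nondecreasing, so the whole sum is monotonically increasing on $[0.776,\infty)\supset [x_{0},\infty)$.

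The main obstacle is the $x_{0}$-inequality (\ref{eq:anfang2}) at the single sharp value $x_{0}=2.0554$. After cancelling $\frac{x_{0}}{b+1}$ it reduces to $\frac{P_{b+1}(x_{0})}{P_{b}(x_{0})}\leq \frac{P_{k+1}(x_{0})}{P_{k}(x_{0})}$ for all pairs $(k,b)$ with $0\leq k\leq b\leq 6$, which is the statement that the sequence $r_{k}:=P_{k+1}(x_{0})/P_{k}(x_{0})$ is nonincreasing on $\{0,1,\ldots,6\}$. This is a finite verification: I would compute $P_{0}(x_{0}),\ldots,P_{7}(x_{0})$ from their closed-form expressions or from Kostant's formula (\ref{eq:kostant}), form the seven ratios, and check the six pairwise comparisons $r_{k+1}\leq r_{k}$ numerically. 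The delicate aspect is that $x_{0}=2.0554$ is essentially sharp: as noted in the text, $b=5$ is the binding case, so I expect one of the comparisons $r_{5}\leq r_{k}$ for some $k<5$ to become an equality very close to this $x_{0}$, forcing the cutoff to be pinned down to four decimal places. Once both parts are in hand, Assumption \ref{annahme2} at $x_{0}=2.0554$ follows immediately from the chain displayed in the first paragraph.
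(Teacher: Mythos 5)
Your overall strategy coincides with the paper's: the reformulation of Assumption \ref{annahme2} via $j=a-1-k$, the monotonicity argument (your decomposition into $\bigl(\frac{P_{k+1}(x)}{P_k(x)}-\frac{x}{k+1}\bigr)+\frac{b-k}{(k+1)(b+1)}x$ is just a repackaging of the paper's estimate $N_k(x)-\frac{1}{b+1}(P_k(x))^2\geq N_k(x)-\frac{1}{k+1}(P_k(x))^2\geq 0$, both resting on the preceding proposition applied with $k$ in place of $b$), and a finite numerical check at $x_0$ for (\ref{eq:anfang2}), exactly as in the paper's table of ratios. Your guess about the binding comparison is also right: $r_5\approx 1.819044$ against $r_4\approx 1.819048$, which is why Table \ref{kleinste} pins $x_0$ at $2.0554$ for $b=5$.

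The gap is in how you state the final verification. You reduce (\ref{eq:anfang2}) to the claim that $r_k:=P_{k+1}(x_0)/P_k(x_0)$ is nonincreasing on $\{0,\ldots,6\}$ and propose to check the consecutive comparisons $r_{k+1}\leq r_k$. This is both stronger than needed and numerically false at $x_0=2.0554$: since $P_0=1$, $P_1=x$, $P_2=x(x+3)/2$, one has $r_0=x_0=2.0554$ but $r_1=(x_0+3)/2\approx 2.5277$, so your very first comparison fails, even though e.g. $r_2\approx 2.0258\leq r_0$ still holds. What (\ref{eq:anfang2}) actually requires is only the pairwise statement $r_b\leq r_k$ for $0\leq k\leq b$, and that is what must be checked from the computed values; it holds for $b\in\{2,\ldots,6\}$ but fails for $(b,k)=(1,0)$. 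This failure is not repairable by a better computation: for $b=1$ one gets $G_1(x)=\frac{x+3}{2}$, and Assumption \ref{annahme2} with $a-1-k=0$ demands $\frac{x+3}{2}\leq x$, i.e.\ $x\geq 3$, so the $b=1$ claim genuinely needs $x_0\geq 3$ (consistently, Table \ref{kleinste} has no entry for $b=1$, the paper's Table \ref{aaaa} quietly omits $r_0$, and the proof of Theorem \ref{main} for $b=1$ in Section 3 treats the $P_0$-term separately instead of via the assumption). So your plan goes through verbatim only for $b\in\{2,\ldots,6\}$ (with the pairwise, not the monotone, check); for $b=1$ you must either exclude $k=0$ and handle that term by a separate argument, or work with $x_0\geq 3$.
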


\begin{proof}
Deriving the functions
$x\mapsto \frac{P_{k+1}\left( x\right) }{P_{k}\left( x\right) }-\frac{x}{b+1}$
for $0\leq k\leq b$ we obtain  using similarly  $N_{k}\left( x\right) $
from (\ref{eq:nenner})
$\frac{N_{k}\left( x\right) -\frac{1}{b+1}\left( P_{k}\left( x\right) \right) ^{2}}{\left( P_{k}\left( x\right) \right) ^{2} }$.
This is not negative if and only if the numerator is not negative. Obviously
this is larger than
$
N_{k}\left( x\right) -\frac{1}{k+1}\left( P_{k}\left( x\right) \right) ^{2}$
which we have seen to be not negative in the proof of the previous proposition.
What remains to check is that
$\frac{P_{b+1}\left( x_{0}\right) }{P_{b}\left( x_{0}\right) }
\leq \frac{P_{k+1}\left( x_{0}\right) }{P_{k}\left( x_{0}\right) }$
for $0\leq k\leq b-1$, compare the following table.
\begin{table}[H]
\[
\begin{array}{|c||c|c|c|c|c|c|}
\hline
b & 1 & 2 & 3 & 4 & 5 & 6 \\ \hline
\frac{P_{b+1}\left( x_{0}\right) }{P_{b}\left( x_{0}\right) }\approx & 2.527700 & 2.025772 & 2.017982 & 1.819048 & 1.819044 & 1.707376 \\ \hline
\end{array}
\]
\caption{\label{aaaa}Approximate values of $\frac{P_{b+1}\left( x_{0}\right) }{P_{b}\left( x_{0}\right) }$ for $b\in \left\{ 1,2,3,4,5,6\right\} $.}
\end{table}
\end{proof}

For fixed $b$
we can also determine the smallest $x_{0}$ for which (\ref{eq:anfang2}) holds.
\begin{table}[H]
\[
\begin{array}{|r|l|}
\hline
b & x_{0}\approx \\ \hline \hline
2 & 2\\ \hline
3 & 2 \\ \hline
4 & 1.6881868943126478278636511038164231908 \\ \hline
5 & 2.0553621798507231766687152242721716951 \\ \hline
6 & 1.5657320643972915718958748689518846691 \\ \hline
\end{array}
\]
\caption{\label{kleinste}Approximate smallest $x_{0}$ for which (\ref{eq:anfang2}) holds.}
\end{table}

\subsection{Partial result}
Unfortunately we cannot show Assumption~\ref{annahme1} yet, but we can show the following weaker version.

\begin{lemma}
If we assume that $\Delta _{b+1,B}\left( x\right) >0$ for
all $x>x_{0}>0$ and $0\leq B\leq b-1$ then
$\frac{P_{b+1}\left( x\right) }{P_{b}\left( x\right) }$
is monotonically increasing for $x>x_{0}$.
\end{lemma}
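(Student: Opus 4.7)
The plan is to compute the derivative of $P_{b+1}(x)/P_b(x)$ directly and recognise the result, after an algebraic regrouping, as a positive combination of the quantities $\Delta_{b+1,B}(x)$ that the hypothesis asserts to be positive.

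First I would reduce monotonicity to positivity of the numerator of the derivative, namely
\[
N(x) := P_{b+1}'(x)\,P_{b}(x) - P_{b+1}(x)\,P_{b}'(x),
\]
since $P_{b}(x)>0$ for all $x>0$ by Kostant's formula (\ref{eq:kostant}), whose coefficients are non-negative and whose linear coefficient is nonzero. Then I would apply the identity (\ref{derivative}) to expand both derivatives, obtaining
\[
N(x)=\sum_{k=1}^{b+1}\frac{\sigma(k)}{k}P_{b+1-k}(x)P_{b}(x)\;-\;\sum_{k=1}^{b}\frac{\sigma(k)}{k}P_{b+1}(x)P_{b-k}(x).
\]

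Next I would isolate the $k=b+1$ term in the first sum (which contributes $\tfrac{\sigma(b+1)}{b+1}P_{b}(x)$, since $P_{0}\equiv 1$), and for $1\le k\le b$ combine the two sums index by index. By the very definition of $\Delta_{a,B}$, the bracketed combination
\[
P_{b+1-k}(x)P_{b}(x) - P_{b+1}(x)P_{b-k}(x)
\]
is exactly $\Delta_{b+1,\,b-k}(x)$. As $k$ runs over $\{1,\dots,b\}$, the secondary index $B=b-k$ runs over $\{0,\dots,b-1\}$, which is precisely the range covered by the hypothesis. Hence
\[
N(x) = \frac{\sigma(b+1)}{b+1}\,P_{b}(x) \;+\; \sum_{k=1}^{b}\frac{\sigma(k)}{k}\,\Delta_{b+1,\,b-k}(x),
\]
which is strictly positive for every $x>x_{0}$: the first summand is positive since $P_{b}(x)>0$, and every summand in the sum is positive by hypothesis and because $\sigma(k)/k>0$. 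Dividing by $P_{b}(x)^{2}>0$ preserves the sign, giving the desired monotonicity.

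There is essentially no serious obstacle here; the lemma is a clean algebraic identity combined with the assumed positivity of the $\Delta_{b+1,B}$'s, and the only minor points to verify are the reindexing producing the $\Delta$ notation and the elementary positivity of $P_{b}$ on the positive real axis. What makes this only a \emph{partial} result, rather than a proof of Assumption~\ref{annahme1}, is that the hypothesis requires the Chern--Fu--Tang inequality at level $(b+1,B)$ for the entire range $0\le B\le b-1$, so we cannot yet bootstrap monotonicity at level $b$ from knowledge at smaller levels alone; closing that gap is the genuine difficulty left open.
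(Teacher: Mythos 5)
Your proposal is correct and follows essentially the same route as the paper: apply the quotient rule, expand $P_{b+1}'$ and $P_b'$ via the identity (\ref{derivative}), split off the $k=b+1$ term $\frac{\sigma(b+1)}{b+1}P_b(x)$, and identify the remaining brackets as $\Delta_{b+1,b-k}(x)>0$ by hypothesis. The reindexing and positivity observations match the paper's argument exactly.
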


\begin{proof}
If we consider its derivative we obtain
$\frac{P_{b+1}^{\prime }\left( x\right) P_{b}\left( x\right) -P_{b+1}\left( x\right) P_{b}^{\prime }\left( x\right) }{\left( P_{b}\left( x\right) \right) ^{2}}$.
The numerator is
\begin{eqnarray*}
&&\sum _{k=1}^{b+1}\frac{\sigma \left( k\right) }{k}P_{b+1-k}\left( x\right) P_{b}\left( x\right) -P_{b+1}\left( x\right) \sum _{k=1}^{b}\frac{\sigma \left( k\right) }{k}P_{b-k}\left( x\right) \\
&=& \frac{\sigma \left( b+1\right) }{b+1}P_{b}\left( x\right) +\sum _{k=1}^{b}\frac{\sigma \left( k\right) }{k}\left( P_{b+1-k}\left( x\right) P_{b}\left( x\right) -P_{b+1}\left( x\right) P_{b-k}\left( x\right) \right) .
\end{eqnarray*}
Now for $A=b+1$ and $B=b-k\leq b-1=A-2$ we can apply the
assumption and obtain that all
$P_{b+1-k}\left( x\right) P_{b}\left( x\right) -P_{b+1}\left( x\right) P_{b-k}\left( x\right) =\Delta _{A,B}\left( x\right) >0$
for $x>x_{0}$.
\end{proof}
%
%
%
\section{Concluding remarks}
We consider sequences $\{a_n\}_{n=0}^{\infty}$ with non-negative elements.
A sequence is log-concave if $a_n^2 - a_{n-1} \, a_{n+1} \geq 0$ for all $n \in \mathbb{N}$, and
strongly
log-concave if the inequalities are strictly positive. Let $c_:= \sum_{i+j =n} a_i \, b_j$ be
the convolution of two sequences. Hoggar \cite{Ho74} proved that the convolution of
two finite positive
(strongly) log-concave sequences is again (strongly) log-concave. 
Let $x_1$ and $x_2$ be complex numbers, then the convolution of the two sequences
$\{P_n(x_1)\}$  and $\{P_n(x_{2})\}$ is equal to $\{P_n(x_1 + x_2)\}$. Note that $P_n(x)>0$ for $x>0$. 

The link between $\Delta_{a,b} \geq 0$ and log-concavity is given by the following observation.
Let $x>0$ and always $a,b \in \mathbb{N}$ with $a> b+1$:
\begin{equation}
\Delta_{a,b}(x) \geq 0  \Longleftrightarrow \frac{P_{b+1}(x)}{P_b(x)} \geq \frac{P_{a}(x)}{P_{a-1}(x)}. 
\end{equation}
\begin{remarks} Let $x>0$ be given. \newline
a) Let $\Delta_{b+2,b}(x) \geq 0$ for all $ b \in \mathbb{N}_0$. Then $\{P_n(x)\}$ is log-concave. 
\newline
b) Let $\{P_n(x)\}$ be log-concave, then $\Delta_{a,b}(x) \geq 0$ for all $a >b+1$ and $b \in \mathbb{N}_0$.
\end{remarks}

Bringmann, Kane, Rolen and Tripp \cite{BKRT20} proved (see also Introduction), 
that there exists a constant $B_0 = B_0(x) := 
\max \left\{ 2x^{11}+\frac{x}{24},\frac{100}{x-24}+\frac{x}{24}\right\}$ for $x \geq 2$ such that
$\Delta_{a,b}(x) \geq 0$ for all $b \geq B_0(x)$ and $a \geq b+1$ (Table \ref{B}).
\begin{table}[H]
\[
\begin{array}{|r|r|}
\hline
x & \max \left\{ 2x^{11}+\frac{x}{24},\frac{100}{x-24}+\frac{x}{24}\right\} \approx \\ \hline \hline
2 & 4096.08333333 \\ \hline
3 & 354294.12500000 \\ \hline
4 & 8388608.16666667 \\ \hline
5 & 97656250.20833333 \\ \hline
\end{array}
\]
\caption{Approximate values of $\max \left\{ 2x^{11}+\frac{x}{24},\frac{100}{x-24}+\frac{x}{24}\right\} $.}
\label{B}
\end{table}

Let $x=k \in \mathbb{N}_{\geq 2}$, then 
$B_0(k) =2k^{11}+\frac{k}{24}$. Thus, $\Delta_{a,b}(x) \geq 0$ for fixed $x>0$ and all pairs $(a,b)$ with
$a \geq b+1$ and $b \in \mathbb{N}_{0}$ is equivalent to $\{P_n(x)\}$ log-concave. Now, by \cite{BKRT20}
it is sufficient to show that the quotients $\frac{P_{n}(x)}{P_{n-1}(x)}$ are decreasing when $n$ is increasing for
all $ 1 \leq n \leq B_0(x)$. 
In \cite{BKRT20} this last step had been executed successfully for $k=2$ and $n \geq 6$ and $k=3$ and all $n$.
The authors also invented some sophisticated computer calculations for $k=4$ and $k=5$.
Although, they still needed a $5$ day and a $71$ day long computer calculation for these cases.
Finally they proved that $\{P_n(2)\}$ is log-concave for $n \geq 6$ and $\{P_n(k)\}$ is log-concave
for $k=3,4$ and $5$. Applying the result of Hoggar finally proves the Chern--Fu--Tang conjecture.
Note that $\lim_{x \rightarrow \infty} B_0(x) = \infty$, which makes this method difficult to prove
Conjecture 2, for general $x\geq 2$. For $0< x <3$, the sequence $\{P_n(x)\}$ is never log-concave
(for small $n$) since $\Delta_{2,0}(x)<0$, which causes technical problems (see also $k=1$ and $k=2$, where
finitely many {\it exceptions} appear).
In this paper we offer an approach which takes care of $x \geq x_0$ bounded from below.
We fix $b$ and determine  $a_0 \in \mathbb{N}$ and $x_0 \in \mathbb{R}_{>0}$ such that
$\Delta_{a,b}\left( x\right) \geq 0$ for all $a \geq a_0$
and $x \geq x_0$. This takes into account that
exceptions may exist and allows for example to vary $a_0$ and $x_0$.
Let $b \in \{0,1,2,3\}$. We have determined $a_0$ and $x_0$ dependent on $b$, such that 
$\Delta_{a,b}(x) \geq 0$ and $\Delta_{a,b}'(x) \geq 0$ for $a \geq a_0$ and $x \geq x_0$.
\begin{minipage}{1.05\textwidth}
\begin{center}
\includegraphics[width=0.4\textwidth]{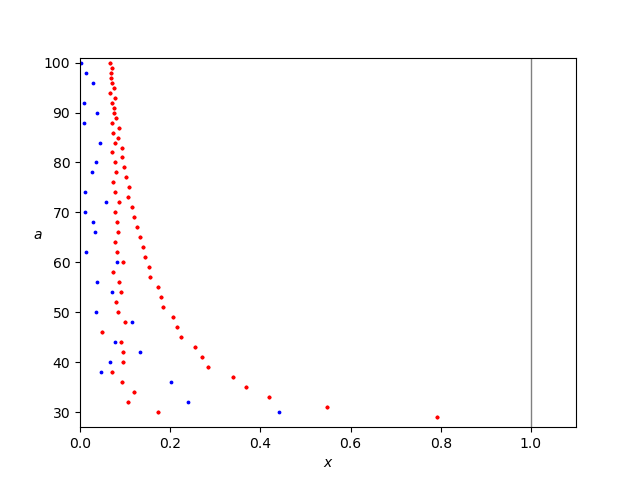}
\includegraphics[width=0.4\textwidth]{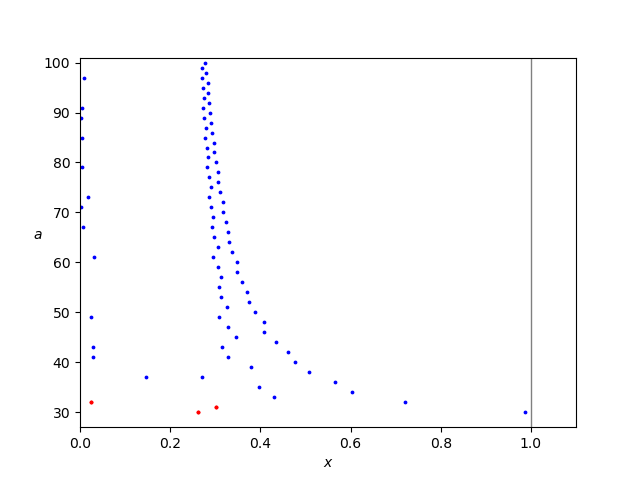}
\captionsetup{margin={0cm,0cm,0cm,0cm}}
\captionof{figure}{Roots of $\Delta_{a,27}(x)$ and $\Delta_{a,28}(x)$ with a positive real part. \\
Blue = real root, red = complex root.} \label{f2728}
\end{center}
\end{minipage}
\ \newline
We expect for $b \geq 27$ and $x_0=1$ that $a_0$ can be chosen as $b+2$. This can be considered as the
generic case (see Figure \ref{f2728} which illustrates this expectation). If we assume $0 < x_0 < 1$, then 
it is an interesting but challenging task to
determine $a_0 = a_0(b,x_0)$.
\newline
\newline
{\bf Acknowledgments.} To be entered later.


\begin{thebibliography}{MMMM99}
\bibitem[AGM17]{AGM17} A. A. Alanazi, S. M. Gagola III, A. O. Munagi: \emph{Combinatorial proof of a partition inequality of Bessenrodt--Ono.}
Ann. Comb. 
\textbf{21} (2017), 331--337.  

\bibitem[BB16]{BB16} O. Beckwith, C. Bessenrodt: \emph{Multiplicative properties of the number of $k$-regular partitions.}
Ann.\ Comb.\
\textbf{20} No.\ 2 (2016), 231--250.  

\bibitem[BO16]{BO16} C.~Bessenrodt, K. Ono: \emph{Maximal multiplicative properties of partitions.\/}
Ann.\ Comb.\
\textbf{20} No.\ 1 (2016), 59--64.  



\bibitem[BKRT20]{BKRT20}  K. Bringmann, B. Kane, L. Rolen, Z. Tripp:
\emph{Fractional partitions and conjectures of Chern--Fu--Tang and Heim--Neuhauser\/}. arXiv:2011.08874v1 [math.NT] 17 Nov 2020


\bibitem[CFT18]{CFT18} S. Chern, S. Fu, D. Tang: \emph{Some inequalities for $k$-colored partition functions.}
Ramanujan J.
\textbf{46} (2018), 713--725.  

\bibitem[DM19]{DM19} M. L. Dawsey, R. Masri: \emph{Effective bounds for the Andrews spt-function.}
Forum Math. \textbf{31} No.\ 3 (2019), 743--767.


\bibitem[DP15]{DP15} S. DeSalvo, I. Pak: \emph{Log-concavity of the partition function.}
Ramanujan J.
\textbf{38} (2015), 61--73.  
\bibitem[He20]{He20} B. Heim:
\emph{Powers of the Dedekind eta function and the Bessenrodt--Ono inequality.}
Lecture at MPIM Bonn, January 2020.

\bibitem[HJ18]{HJ18} E. Hou, M. Jagadeesan: \emph{Dyson's partition ranks and their multiplicative extension.}
Ramanujan J.
\textbf{45} No. 3 (2018), 817--839.



\bibitem[HN18]{HN18}  B. Heim, M. Neuhauser: \emph{Polynomials related to powers of the Dedekind eta function.}
Integers \textbf{18} No. A97 (2018).

\bibitem[HN19A]{HN19A} B. Heim, M. Neuhauser: 
\emph{Log-concavity of recursively defined polynomials.\/}
Journal of Integer Sequences Vol. \textbf{22} (2019), Article 19.1.5.



\bibitem[HN19B]{HN19B} B. Heim, M. Neuhauser: \emph{Variants of a partition inequality of Bessenrodt--Ono.} Res.
Number Theory \textbf{5} (2019), 32.



\bibitem[HNT20]{HNT20} B. Heim, M. Neuhauser, R. Tr{\"oger}:
\emph{Polynomization of the Bessenrodt--Ono inequality.}
Annals of Combinatorics \textbf{24} (2020), 697--709.

\bibitem[Ho74]{Ho74} S. Hoggar: \emph{Chromatic polynomials and logarithmic concavity.} 
J. Combinatorial Theory Ser. B \textbf{16} (1974), 248--254.

\bibitem[IJT20]{IJT20} J. Iskander, V. Jain,
V. Talvola: \emph{Exact formulae for the fractional partition functions.}
Res. Number Theory
\textbf{6} No. 3 (2020), 1--17.


\bibitem[Ko04]{Ko04} B. Kostant: \emph{Powers of the Euler product and commutative subalgebras of a complex simple Lie algebra.} Invent. Math.
 \textbf{158} (2004), 181--226.














\bibitem[Ni78]{Ni78} J.-L. Nicolas: \emph{Sur les entiers $N$ pour lesquels il y a beaucoup des groupes ab{\'{e}}liens d'ordre $N$.}
Ann. Inst. Fourier \textbf{28}, No. 4 (1978), 1--16.

\bibitem[On03]{On03}
K. Ono: \emph{The Web of Modularity: Arithmetic of the Coefficients of Modular Forms
and q-series.} Conference Board of Mathematical Sciences \textbf{102} (2004).
 \end{thebibliography}
\end{document}